\let\pa\partial 
\let\na\nabla  
\let\eps\varepsilon  
\newcommand{\N}{{\mathbb N}}  
\newcommand{\R}{{\mathbb R}} 
\newcommand{\diver}{\operatorname{div}}  
\newcommand{\T}{{\mathbb T}}
\newcommand{\V}{{\mathcal V}}
\newcommand{\HH}{{\mathcal H}}
\newcommand{\X}{{\mathcal X}}
\newcommand{\Z}{{\mathcal Z}}
\newtheorem{theorem}{Theorem}   
\newtheorem{lemma}[theorem]{Lemma}   
\newtheorem{proposition}[theorem]{Proposition}   
\newtheorem{remark}[theorem]{Remark}
\begin{document}  

\title[Entropy dissipative one-leg schemes]{Entropy dissipative 
one-leg multistep time approximations of nonlinear diffusive equations}

\author{Ansgar J\"ungel}
\address{Institute for Analysis and Scientific Computing, Vienna University of  
	Technology, Wiedner Hauptstra\ss e 8--10, 1040 Wien, Austria}
\email{juengel@tuwien.ac.at} 

\author[J. P. Mili\v{s}i\'{c}]{Josipa-Pina Mili\v{s}i\'{c}}
\address{Department of Applied Mathematics,
Faculty of Electrical Engineering and Computing, University of Zagreb, Unska 3,
10000 Zagreb, Croatia}
\email{pina.milisic@fer.hr}

\date{\today}

\thanks{The first author acknowledges partial support from   
the Austrian Science Fund (FWF), grants P22108, P24304, I395, and W1245,
and the Austrian-French Project of the Austrian Exchange Service (\"OAD).
This research was supported by the European Union under 
Grant Agreement number 304617 (Marie-Curie Project ``Novel Methods in 
Computational Finance'')} 

\begin{abstract}
New one-leg multistep time discretizations of nonlinear 
evolution equations are investigated. The main features of the scheme
are the preservation of the nonnegativity and the entropy-dissipation
structure of the diffusive equations. The key ideas are to combine Dahlquist's
G-stability theory with entropy-dissipation methods and to introduce a
nonlinear transformation of variables which provides a quadratic structure
in the equations.
It is shown that G-stability of the one-leg scheme is sufficient to derive
discrete entropy dissipation estimates. The general result is applied to
a cross-diffusion system from population dynamics and
a nonlinear fourth-order quantum diffusion model,
for which the existence of semi-discrete weak solutions is proved.
Under some assumptions on the operator of the evolution equation, the
second-order convergence of solutions is shown. Moreover, some numerical
experiments for the population model are presented, which underline the theoretical
results.
\end{abstract}

\keywords{Linear multistep methods, entropy dissipation, diffusion equations, 
population dynamics, quantum drift-diffusion equation, 
Derrida-Lebowitz-Speer-Spohn equation, existence of solutions.}  
 
\subjclass[2000]{65M12, 35Q40, 92D25, 82D37.}

\maketitle


\section{Introduction}

Evolution equations with applications in the natural sciences typically contain some
structural information reflecting inherent physical properties such as
positivity, mass conservation, or energy and entropy dissipation.
In this paper, we propose novel one-step and two-step semidiscrete numerical schemes, 
which preserve the structure of the underlying diffusive equations.
For the analysis, we combine linear multistep discretizations, investigated for
ordinary differential equations from the 1980s on, and entropy dissipation
methods, which have been proposed in recent years.

Linear multistep methods refer to previous time steps and derivative values.
They are proposed to solve stiff differential equations. An important class
of these methods are the backward differentiation formulas (BDF). 
Multistep methods were also applied
to nonlinear evolution equations. For instance, linear multistep schemes
for fully nonlinear problems, which are governed by a nonlinear
mapping with sectorial first Fr\'echet derivative, were
dealt with, by linearization, in \cite{GOPT02,OTK04}, and quasilinear evolution
equations were treated in \cite{LeR80}. In \cite{Han06,Rul96},
multistep discretizations 
for problems governed by maximal monotone or monotone operators 
were studied. For monotone evolution equations, also
other schemes were proposed, for instance, stiffly accurate implicit 
Runge-Kutta methods \cite{EmTh10}.
Error estimates for two-step BDF methods for nonlinear evolution equations
were shown in \cite{Emm05,Emm09}.

Dahlquist introduced in \cite{Dah76} so-called one-leg methods which need
only one function evaluation in each time step. 
With every multistep method we can associate its one-leg counterpart and vice versa.
It turned out that one-leg methods allow for a stability analysis
for stiff nonlinear problems. The stability behavior was defined
in terms of the so-called G-stability \cite{Dah78}, which can be related
to discrete ``energy'' dissipation \cite{Hil97}. Stiffness independent
error estimates with the optimal order of convergence were derived in \cite{HuSt91}.

Our schemes are generalizations of dissipative multistep methods analyzed in,
e.g., \cite{DLN83,Hil97,Hua00}. In order to explain the idea,
we consider the evolution equation
\begin{equation}\label{1.eq}
  u_t + A(u) = 0, \quad t>0, \quad u(0)=u_0,
\end{equation}
where $A$ is some (nonlinear) operator defined on $D(A)\subset\V$, where 
$\V\hookrightarrow \HH\hookrightarrow\V'$ is a Gelfand tripel
(see Section \ref{sec.thm3} for details). 
In the literature, usually the monotonicity condition
\begin{equation}\label{1.mono}
  \langle A(u),u\rangle \ge 0 \quad\mbox{for all }u\in D(A),
\end{equation}
where $\langle\cdot,\cdot\rangle$ is the dual product between $\V'$ and $\V$, 
is assumed \cite{Hil97,HuSt91}.
This condition implies that the ``energy'' $\frac12\|u(t)\|^2$ is nonincreasing:
\begin{equation}\label{1.ene}
  \frac12\,\frac{d}{dt}\|u(t)\|^2 = \langle u_t(t),u(t)\rangle 
  = -\langle A(u(t)),u(t)\rangle \le 0,
\end{equation}
where $\|\cdot\|$ is the norm on $\HH$.

In many situations, not the ``energy'' is increasing
but a nonlinear expression $H[u(t)]$, which we call an ``entropy''. 
To explain this statement, we identify the Fr\'echet derivative $H'[u]$ with its
Riesz representative $h'(u)$, i.e.\ $H'[u]v=(h'(u),v)$, 
where $(\cdot,\cdot)$ is the scalar product on $\HH$. 
Then, replacing assumption \eqref{1.mono} by
\begin{equation}\label{1.monoh}
  \langle A(u),h'(u)\rangle \ge 0\quad\mbox{for all }u\in D(A),
\end{equation}
we find that formally
\begin{equation}\label{1.ent}
  \frac{d}{dt}H[u(t)] = (u_t(t),h'(u(t))) = -\langle A(u(t)),h'(u(t))\rangle \le 0,
\end{equation}
i.e., $H[u]$ is a Lyapunov functional for \eqref{1.eq} which expresses the
dissipation property of the evolution equation. Note that condition \eqref{1.monoh}
formally reduces to \eqref{1.mono} for the special choice $H[u]=\frac12\|u\|^2$.

In order to recover property \eqref{1.ene} on a discrete level, Hill \cite{Hil97}
discretizes \eqref{1.eq} by the one-leg method
$$
  \tau^{-1}\rho(E)u_k + A(\sigma(E)u_k) = 0,
$$
where $u_k$ approximates $u(t_k)$ with $t_k=\tau k$, $\tau>0$ is the time step size,
and
\begin{equation}\label{1.rho}
   \tau^{-1}\rho(E)u_k =  \tau^{-1}\sum_{j=0}^p \alpha_j u_{k+j}, \quad
  \sigma(E)u_k = \sum_{j=0}^p\beta_j u_{k+j}
\end{equation}
with $\alpha_j$, $\beta_j\in\R$
are approximations of $u_t(t_k)$ and $u(t_k)$, respectively. 
Hill proves that, under some assumptions, this scheme yields 
a dissipative discretization if and only if the scheme is strongly A-stable
or strongly G-stable (see Section \ref{sec.g.ene} for a definition of G-stability). 
The proof relies on the quadratic structure of the definition of G-stability; this
structure is already present in \eqref{1.ene}. 
Unfortunately, \eqref{1.ent} does not possess such a structure and the proofs of
\cite{Hil97} do not apply under the assumption \eqref{1.monoh}.

The main idea of this paper is to {\em enforce} a quadratic structure
by introducing the variable $v$ by $v^2=h(u)$ (assuming that $h(u)\ge 0$).
More precisely, we discretize \eqref{1.eq} in the formulation
\begin{equation}\label{1.eqv}
  h(u)^{1/2}h'(u)^{-1}v_t + \frac12 A(u) = 0, \quad t>0, \quad v(0) = h(u_0)^{1/2},
\end{equation}
which is formally equivalent to \eqref{1.eq}.
The semidiscrete scheme reads as
\begin{equation}\label{1.scheme}
  h(w_k)^{1/2}h'(w_k)^{-1}\rho(E)v_{k} + \frac{\tau}{2} A(w_k) = 0, \quad
  w_k = h^{-1}\big((\sigma(E)v_{k})^2\big), \quad k\ge 0.
\end{equation}
Note that $v_k$ approximates $h(u(t_k))^{1/2}$ and $w_k$ approximates $u(t_k)$.

The {\em first aim} of this paper is to prove that this scheme dissipates the discrete
entropy $H[V_k]$ with $V_k=(v_k,\ldots,v_{k+p-1})$ 
(see Proposition \ref{prop.g}), i.e.,
\begin{equation}\label{1.ent.diss}
  H[V_{k+1}] - H[V_k] \le 0,
\end{equation}
which is the discrete analogue of \eqref{1.ent}. Here, $H[V_k]$ is defined by
$$
  H[V_k] = \frac12\sum_{i,j=0}^{p-1}G_{ij}(v_{k+i},v_{k+j}),
$$
and $G=(G_{ij})$ is the matrix occuring in the definition of the G-stability
(see Section \ref{sec.g} for details). Note that 
$H[V_k]\ge 0$ since $G$ is assumed to be positive definite. 

The {\em second aim} of this paper is to prove that scheme \eqref{1.scheme} possesses
an (entropy-dissipa\-ting) nonnegative weak solution. 
For the existence proof we need additional
assumptions on the operator $A$. The main conditions are that equation \eqref{1.eq}
is nonnegativity-preserving and possesses two entropies,
$h(u)=u^\alpha$ for $1<\alpha<2$ and $h(u)=u\log u$ (assuming that these
expressions are defined). The nonnegativity-preservation of our scheme is
inherited by the first condition.
This property is proved by using the entropy density $h(u)=u\log u$ and
the variable transformation $u=e^y>0$. The entropy density $h(u)=u^\alpha$
allows us to show the entropy-dissipation of our scheme. We believe that the
condition $1<\alpha<2$ is technical. It is needed to control the discrete
time derivative in \eqref{1.scheme} when using the test function $\log w_k$
(see Lemma \ref{lem.est.t}).

A general existence proof would require more assumptions on $A$ which might
restrict the applicability of our results. Therefore, we prefer to demonstrate the
flexibility of our ideas by investigating two very different examples for $A$.
The first example is a cross-diffusion system, the second
one is a highly nonlinear equation of fourth order. In particular, our scheme
is {\em not} restricted to scalar or second-order diffusion equations.
For these examples, which are detailed in Sections \ref{sec.1} and \ref{sec.2}, 
we give rigorous proofs of the existence of semidiscrete weak
solutions to scheme \eqref{1.scheme}. 

Our {\em third aim} is to prove the second-order convergence of the
one-leg scheme \eqref{1.scheme}. Given a sequence of positive solutions $(v_k)$ 
to \eqref{1.scheme} and a smooth positive solution $v=u^{\alpha/2}$ to \eqref{1.eqv},
this means that there exists $C>0$ such that for all sufficiently small $\tau>0$,
\begin{equation}\label{1.tau2}
  \|v_k-v(t_k)\| \le C\tau^2, \quad t_k=\tau k,\ k\ge 1.
\end{equation}
For this result, we assume that the mapping $v\mapsto v^{1-\alpha/2}A(v^{2/\alpha})$
satisfies a one-sided Lip\-schitz condition and that the scheme $(\rho,\sigma)$
is of second order (see Theorem \ref{thm.second} for details). For instance,
if $A$ is the fourth-order operator of the second example and $\alpha=1$,
this assumption is satisfied \cite{JuPi03}. The proof of \eqref{1.tau2}
is similar to the proof of \cite[Theorem V.6.10]{HaWa91}, based on an idea
of Hundsdorfer and Steininger \cite{HuSt91}. For convenience, we present the
full proof, specialized to the present situation in general Hilbert spaces.

The paper is organized as follows.
In Section \ref{sec.main}, we state the main results on the existence of
semidiscrete weak solutions and the convergence rate.
General one-leg multistep schemes, which dissipate the energy or entropy, 
are discussed in Section \ref{sec.g}.
Section \ref{sec.ex} is devoted to the existence analysis. We detail
the strategy of the existence proof in a general context and prove Theorems
\ref{thm.skt} and \ref{thm.dlss}. The second-order convergence
of the scheme is proved in Section \ref{sec.second}. 
Numerical examples in Section \ref{sec.numerics}
for the cross-diffusion population model,
using the two-step BDF and the so-called $\gamma$-method (see Remark \ref{rem.ex}), 
show that the discrete entropy $H[V_k]$ decays exponentially fast to the 
stationary state. Finally, in the Appendix, we derive
the family of all G-stable second-order one-leg schemes.


\section{Main results}\label{sec.main}

We state the existence theorems for the semidiscretized
cross-diffusion system and fourth-order
equation and a theorem on the second-order convergence of the one-leg scheme.

\subsection{Cross-diffusion population system}\label{sec.1}

The first example is the cross-diffusion population model of Shigesada,
Kawasaki, and Teramoto \cite{SKT79}:
\begin{align}
  u^{(1)}_t - \diver\big((d_1+a_1u^{(1)}+u^{(2)})\na u^{(1)}
  +u^{(1)}\na u^{(2)}\big) &= 0, \label{1.skt1} \\
  u^{(2)}_t - \diver\big((d_2+a_2u^{(2)}+u^{(1)})\na u^{(2)}
  +u^{(2)}\na u^{(1)}\big) &= 0, \quad t>0, \label{1.skt2}
\end{align}
where $u^{(j)}(x,t)$ is the density of the $j$-th species,
$d_1$, $d_2>0$ are the diffusion coefficients, $a_1$, $a_2>0$
denote the self-diffusion coefficients,
and the expression $\na(u^{(1)}u^{(2)})=u^{(1)}\na u^{(2)}+u^{(2)}\na u^{(1)}$ is the 
cross-diffusion term.
The above system has been scaled in such a way that the coefficient of
the cross-diffusion term is equal to one (see \cite{GGJ03} for details).
The equations are solved on the $d$-dimensional torus $\T^d$ with the initial conditions
\begin{equation}\label{1.skt3}
  u^{(1)}(0) = u^{(1)}_0, \quad u^{(2)}(0) = u^{(2)}_0\quad\mbox{in }\T^d.
\end{equation}
Our results are also valid for homogeneous Neumann boundary conditions
and suitable reaction (Lotka-Volterra) terms.
This model describes the time evolution of two competing species neglecting
Lotka-Volterra terms and effects of the environment. 
The basic idea is that the primary cause of dispersal is migration to avoid crowding
instead of just random motion, modeled by diffusion.
The model can be derived from
a random walk on a lattice by assuming that the transition probabilities
for a one-step jump depend linearly on the species' numbers.

The main feature of system \eqref{1.skt1}-\eqref{1.skt2} is that its diffusion
matrix is generally neither symmetric nor positive definite. 
Using entropy methods, the implicit Euler time discretization 
and a partial finite-difference
approximation, the global existence of weak solutions was shown
in \cite{ChJu04}. Instead of discretizing the cross-diffusion term by finite 
differences, an elliptic regularization was employed in \cite{ChJu06}.
Another (simpler) regularization was suggested in the finite-element context
by Barrett and Blowey \cite{BaBl04} by using an approximate entropy functional.
For the one-dimensional equations, a temporally semi-discrete approximation
was investigated in \cite{GGJ03}.  
Andreianov et al.\ \cite{ABB11} employed a finite-volume method assuming
positive definite diffusion matrices. A deterministic particle method with a
Peaceman-Rachford operator splitting in time was developed by Gambino et al.\
\cite{GLS09}. In all these approaches (except \cite{GLS09}), an implicit
Euler discretization was used. We allow for (G-stable) higher-order 
time discretizations.

We choose the entropy density $h(u)=(u^{(1)})^\alpha + (u^{(2)})^\alpha$ 
for $\alpha>1$ and $u=(u^{(1)},u^{(2)})$ and the discrete entropy
\begin{equation}\label{1.H}
  H[V_k] = \frac12\sum_{i,j=0}^{p-1}G_{ij}\int_{\T^d}v_{k+i}\cdot v_{k+j}dx
  = \frac12\sum_{i,j=0}^{p-1}G_{ij}\int_{\T^d}(v_{k+i}^{(1)}v_{k+j}^{(1)}
  + v_{k+i}^{(2)}v_{k+j}^{(2)})dx.
\end{equation}

\begin{theorem}[Semidiscrete population system]\label{thm.skt}
Let $d\le 3$, $1<\alpha<2$, and $4a_1a_2\ge\max\{a_1,a_2\}+1$.
Let $v_0,\ldots,v_{p-1}\in L^2(\T^d)^2$ be nonnegative
componentwise. Furthermore, let the scheme $(\rho,\sigma)$, defined in 
\eqref{1.rho}, be G-stable (hence, $p\le 2$) and assume that $\alpha_p>0$
and $\beta_p>0$. Then there exists a sequence of weak solutions $(v_k,w_k)
=(v_{k}^{(1)},v_{k}^{(2)},w_{k}^{(1)},w_{k}^{(2)})\in L^2(\T^d)^2
\times W^{1,3/2}(\T^d)^2$ to 
\begin{align}
  \frac{2}{\alpha\tau}(w_{k}^{(1)})^{1-\alpha/2}\rho(E)v_{k}^{(1)} + \diver\big(
  (d_1+a_1 w_{k}^{(1)}+w_{k}^{(2)})\na w_{k}^{(1)} 
  + w_{k}^{(1)}\na w_{k}^{(2)}\big) &= 0, \label{1.d1} \\
  \frac{2}{\alpha\tau}(w_{k}^{(2)})^{1-\alpha/2}\rho(E)v_{k}^{(2)} + \diver\big(
  (d_2+a_2 w_{k}^{(2)}+w_{k}^{(1)})\na w_{k}^{(2)} 
  + w_{k}^{(2)}\na w_{k}^{(1)}\big) &= 0, \label{1.d2}
\end{align}
where $w_{k}^{(j)}=(\sigma(E)v_{k}^{(j)})^{2/\alpha}$, 
$\sigma(E)v_{k}^{(j)}\ge 0$, and 
$w_{k}^{(j)}\in L^{3\alpha}(\T^d)^2$ $(j=1,2$, $k\ge 0)$. 
The scheme dissipates the entropy in the sense
\begin{equation}\label{1.skt.ent}
  H[V_{k+1}] + \frac{2\tau}{\alpha^2}(\alpha-1)\int_{\T^d}\big(d_1
  |\na w_{1,k}^{\alpha/2}|^2 + d_2|\na w_{2,k}^{\alpha/2}|^2\big)dx
  \le H[V_k],
\end{equation}
where $H[V_k]$ is defined in \eqref{1.H}.
\end{theorem}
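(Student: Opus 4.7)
The plan is to separate the argument into two parts: the entropy-dissipation inequality \eqref{1.skt.ent}, which follows from the scheme and Proposition~\ref{prop.g} by a single test-function computation, and the existence of weak solutions at each time step, for which the dissipation provides the central a priori estimate.

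\emph{Entropy dissipation.} Assuming a solution $(v_{k+p},w_k)$ with $\sigma(E)v_k^{(j)}\ge 0$ is at hand, the scheme's constraint $w_k^{(j)}=(\sigma(E)v_k^{(j)})^{2/\alpha}$ implies $\sigma(E)v_k^{(j)}=(w_k^{(j)})^{\alpha/2}$. Testing \eqref{1.d1}--\eqref{1.d2} with $\alpha(w_k^{(j)})^{\alpha-1}$ componentwise and summing over $j$ converts the discrete time-derivative term into $(2/\tau)\int_{\T^d}\sigma(E)V_k\cdot\rho(E)V_k\,dx$, which Proposition~\ref{prop.g} bounds from below by $(2/\tau)(H[V_{k+1}]-H[V_k])$. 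On the diffusive side, the standard Shigesada--Kawasaki--Teramoto entropy computation for the power entropy $u^\alpha$ shows that the algebraic condition $4a_1a_2\ge\max\{a_1,a_2\}+1$ renders the pointwise entropy-production matrix positive semidefinite and yields
\begin{equation*}
\langle A(w_k),h'(w_k)\rangle \ge \frac{4(\alpha-1)}{\alpha^2}\int_{\T^d}\big(d_1|\na (w_k^{(1)})^{\alpha/2}|^2 + d_2|\na (w_k^{(2)})^{\alpha/2}|^2\big)\,dx.
\end{equation*}
Rearrangement produces \eqref{1.skt.ent} together with uniform $H^1$-control of $(w_k^{(j)})^{\alpha/2}$.

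\emph{Existence at one step.} To realize $(v_{k+p},w_k)$ with $w_k^{(j)}>0$, I would set up a Leray--Schauder fixed-point argument in the logarithmic entropy variable $y^{(j)}=\log w_k^{(j)}\in H^1(\T^d)$, which enforces positivity, supplemented with an $\eps$-regularization $\eps(-\Delta y^{(j)}+y^{(j)})$. Freezing the SKT coefficients at an auxiliary iterate $\tilde w$ turns the scheme into a coercive linear elliptic problem solvable by Lax--Milgram, and the resulting map $\tilde y\mapsto y$ is continuous and compact via the Sobolev embedding $H^1(\T^d)\hookrightarrow L^{3\alpha}(\T^d)$ (using $d\le 3$). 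The a priori bound along the homotopy comes from repeating the entropy computation at the $\eps$-level, which contributes an additional $\eps\|y\|_{H^1}^2$-coercive term. Combining the resulting $L^{3\alpha}$-bound on $w_k^{(j)}$ with the identity $\na w_k^{(j)}=(2/\alpha)(w_k^{(j)})^{1-\alpha/2}\na (w_k^{(j)})^{\alpha/2}$ and H\"older's inequality produces the $W^{1,3/2}$-bounds. The limit $\eps\to 0$ is then performed via weak compactness and strong $L^2$-convergence of $(w_k^{(j)})^{\alpha/2}$ (from the $H^1$-bound and Rellich), which suffices to identify the nonlinear cross-diffusion products in the weak formulation.

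The main obstacle is propagating the entropy estimate through the fixed-point iteration, since the key cancellation relying on $\sigma(E)v_k^{(j)}=(w_k^{(j)})^{\alpha/2}$ is broken when the SKT coefficients are frozen at a different iterate. The standard remedy is to let the $\eps$-term carry the a priori bound during the homotopy and to invoke the clean entropy identity only after a fixed point has been obtained. The restriction $1<\alpha<2$ is essential here: it keeps $(w_k^{(j)})^{1-\alpha/2}$ bounded as $w_k^{(j)}\to 0$ and $(w_k^{(j)})^{\alpha-1}$ in a Lebesgue space dual to $H^1$ in dimensions $d\le 3$, so that every pairing in the iteration remains uniformly finite.
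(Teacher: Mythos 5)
Your overall architecture matches the paper's: entropy dissipation via the test function $(w_k^{(j)})^{\alpha-1}$ combined with Proposition~\ref{prop.g} and the SKT entropy computation under $4a_1a_2\ge\max\{a_1,a_2\}+1$ (this is Lemma~\ref{lem.assA}); existence via an exponential change of variables, an $\eps$-regularization carrying the Leray--Schauder bound, and a limit $\eps\to0$ identifying $\na w_k^{(j)}=(2/\alpha)(w_k^{(j)})^{1-\alpha/2}\na(w_k^{(j)})^{\alpha/2}$ in $L^{3/2}$. However, there is a genuine gap in your choice of function space and regularization. You propose to run the fixed point for $y=\log w_k$ in $H^1(\T^d)$ with the regularization $\eps(-\Delta y+y)$. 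For $d=2,3$ one has $H^1(\T^d)\not\hookrightarrow L^\infty(\T^d)$, so the exponentials $e^{\tilde z^{(1)}}$, $e^{\tilde z^{(2)}}$ appearing as frozen diffusion coefficients, as well as $e^{\alpha y/2}$ and $e^{(1-\alpha/2)y}$ in the discrete time derivative, are not controlled in any Lebesgue space; the coefficient-frozen bilinear form is then neither well defined nor continuous on $H^1$, and Lax--Milgram cannot be applied. This is precisely why the paper works in $\Z=H^2(\T^d)^2\hookrightarrow L^\infty(\T^d)$ and regularizes with the fourth-order operator $L(y)=\Delta^2y-\diver(|\na y|^2\na y)+y$. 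The quartic gradient term is not optional: Lemma~\ref{lem.assL} shows that the pointwise combination $\Delta y\,\Delta(e^{(\alpha-1)y})+|\na y|^2\na y\cdot\na(e^{(\alpha-1)y})$ is a sum of squares, which is what yields $\langle L(y),e^{(\alpha-1)y}\rangle\ge-C$ and lets the entropy estimate survive the regularization. Your second-order regularization would be harmless for that pairing but cannot deliver the $L^\infty$ control that the exponential transformation requires.

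A secondary inaccuracy concerns the a priori bound along the homotopy and the role of $1<\alpha<2$. In the paper this bound is \emph{not} obtained by "repeating the entropy computation at the $\eps$-level"; it is obtained by testing with $\phi=y$ itself, and the delicate point is Lemma~\ref{lem.est.t}: the pairing of the discrete time derivative with $y$ is bounded from below because the scalar function $x\mapsto(\alpha_p/\beta_p)xe^x-\tfrac12x^2e^{(2-\alpha)x}$ is bounded below on $\R$, which holds exactly when $2-\alpha<1$, i.e.\ $\alpha>1$, together with $\alpha<2$ and $\alpha_p,\beta_p>0$. Your stated reasons for the restriction $1<\alpha<2$ (integrability of $(w_k)^{1-\alpha/2}$ and duality of $(w_k)^{\alpha-1}$ with $H^1$) miss this mechanism, which is where the hypothesis actually bites. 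The remainder of your outline --- deferring the clean entropy identity to the fixed point, deriving the $W^{1,3/2}$ bound by H\"older from the $L^{3\alpha}$ and $H^1$ bounds, and passing to the limit by weak compactness --- is consistent with the paper once the functional setting is corrected.
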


The condition $4a_1a_2\ge\max\{a_1,a_2\}+1$ is needed to prove that
the cross-diffusion system \eqref{1.skt1}-\eqref{1.skt2} dissipates
the entropy for $1<\alpha<2$; see Lemma \ref{lem.assA}.


\subsection{Fourth-order quantum diffusion equation}\label{sec.2}

We consider the quantum diffusion equation, also called
Derrida-Lebowitz-Speer-Spohn (DLSS) equation,
\begin{equation}\label{1.dlss}
  u_t + \na^2:(u\na^2\log u) = 0, \quad t>0, \quad u(0)=u_0 \quad\mbox{in }\T^d,
\end{equation}
where $\na^2 u$ is the Hessian matrix of $u$ and $A:B=\sum_{i,j}A_{ij}B_{ij}$
is the Frobenius inner product between matrices. 
This equation is the zero-temperature and zero-field limit of the
quantum drift-diffusion model, which describes the evolution of the electron
density $u(t)$ in a quantum semiconductor device \cite{Jue09}. It was
derived in \cite{DMR05} from a relaxation-time Wigner equation. Its one-dimensional
version was derived in \cite{DLSS91} in a suitable scaling limit from the time-discrete
Toom model, where $u$ is related to a random variable.

The global-in-time existence of nonnegative weak solutions to \eqref{1.dlss}
was proven in \cite{GST09,JuMa08}. 
Most of the numerical schemes proposed for \eqref{1.dlss}
are based on an implicit Euler discretization in one space dimension. 
In \cite{JuPi01}, the convergence of a positivity-preserving semidiscrete 
Euler scheme was shown.
A fully discrete finite-difference scheme which preserves the positivity, 
mass, and physical entropy was derived in \cite{CJT03}. 
D\"uring et al.\ \cite{DMM10} employed the variational structure of \eqref{1.dlss}
on a fully discrete level and introduced a discrete minimizing movement scheme.
Finally, a two-step BDF method was applied to
\eqref{1.dlss} in \cite{BEJ12} and the second-order convergence of semidiscrete
solutions was shown. Here, we generalize \cite{BEJ12} by allowing for general
(G-stable) second-order time discretizations.

We choose the entropy density $h(u)=u^\alpha$ 
for $\alpha>1$ and the discrete entropy
$H[V_k]=\frac12\sum_{i,j=0}^{p-1}G_{ij}\int_{\T^d}v_{k+i}v_{k+j}dx$.

\begin{theorem}[Semidiscrete DLSS equation]\label{thm.dlss}
Let $1\le d\le 3$, $1<\alpha<(\sqrt{d}+1)^2/(d+2)$, and let 
$v_0,\ldots,v_{p-1}\in L^2(\T^d)$ be nonnegative. Furthermore, let the scheme 
$(\rho,\sigma)$ be G-stable (hence, $p\le 2$)
and assume that $\alpha_p>0$ and $\beta_p>0$ hold.
Then there exists a sequence of weak solutions $(v_k,w_k)\in 
L^2(\T^d)\times L^\infty(\T^d)$ to 
$$
  \frac{2}{\alpha\tau}w_k^{1-\alpha/2}\rho(E)v_{k} + \na^2:(w_k\na^2\log w_k) = 0,
$$
satisfying $w_k=(\sigma(E)v_{k})^{2/\alpha}$, $\sigma(E)v_{k}\ge 0$, 
$w_k^{\alpha/2}\in H^2(\T^d)$, and 
$w_{k}^{1/2}\in W^{1,2\alpha}(\T^d)$, in the following sense: 
For all $\phi\in W^{2,\alpha/(\alpha-1)}(\T^d)$ and $k\ge 0$,
\begin{equation}\label{ex.weak0}
  \frac{1}{\tau}\int_{\T^d}w_{k}^{1-\alpha/2}\rho(E)v_{k} \phi dx
  + \int_{\T^d}\big(w_k^{1-\alpha/2}\na^2 w_{k}^{\alpha/2} 
  - \alpha^2\na w_{k}^{1/2}\otimes\na w_{k}^{1/2}\big):\na^2\phi dx = 0.
\end{equation}
The scheme dissipates the entropy in the sense
\begin{equation}\label{ex.H}
  H[V_{k+1}] + \frac{\alpha}{4}\kappa_\alpha\tau
  \int_{\T^d}(\Delta w_{k}^{\alpha/2})^2 dx \le H[V_k],
\end{equation}
where $\kappa_\alpha>0$ only depends on $\alpha$ and $d$.
\end{theorem}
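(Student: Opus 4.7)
My plan is to argue by induction on $k$: given nonnegative $v_0,\ldots,v_{k+p-1}$, the scheme at step $k$ has only one unknown, namely $v_{k+p}$, or equivalently $w_k=(\sigma(E)v_k)^{2/\alpha}$, since $\beta_p\ne 0$ makes both $\sigma(E)v_k$ and $\rho(E)v_k$ affine functions of $v_{k+p}$. This reduces the scheme to a single nonlinear fourth-order elliptic equation for $w_k$. Because the formulation requires $w_k\ge 0$ and the DLSS operator in strong form requires strict positivity, I would follow the strategy of \cite{JuMa08,BEJ12} and use the exponential substitution $w_k=e^y$ with $y:\T^d\to\R$ as the primary unknown, regularizing by adding an $\eps$-scaled elliptic term of the form $\eps(\Delta^2 y+y)$ to ensure coercivity in $H^2(\T^d)$.

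For the regularized problem I would apply a Leray-Schauder fixed-point argument. Freezing the coefficient $e^{\bar y}$ appearing in the quasilinear fourth-order term for a given $\bar y\in L^\infty(\T^d)$, the resulting linear coercive fourth-order equation is uniquely solvable in $H^2(\T^d)$ via Lax-Milgram, and the associated map $\bar y\mapsto y$ is compact thanks to the embedding $H^2(\T^d)\hookrightarrow L^\infty(\T^d)$, valid for $d\le 3$. The uniform bound on the fixed points follows by testing with $y$ itself and using the $\eps$-regularization to absorb the quasilinear term.

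Uniform estimates in $\eps$ would be derived from two choices of test functions. Testing with $\alpha w_k^{\alpha-1}$, the derivative of the power entropy $h(u)=u^\alpha$, yields on the time side---via Proposition \ref{prop.g} and G-stability---the telescoping $H[V_{k+1}]-H[V_k]\le 0$ up to the operator contribution, while on the operator side one invokes the J\"ungel-Matthes algebraic identity
$$
  \int_{\T^d}\na^2:(w\na^2\log w)\cdot w^{\alpha-1}dx
  \ge \kappa_\alpha\int_{\T^d}(\Delta w^{\alpha/2})^2 dx,
$$
which holds precisely for $1<\alpha<(\sqrt d+1)^2/(d+2)$. Together these yield \eqref{ex.H} together with the a priori bounds $w_k^{\alpha/2}\in H^2(\T^d)$ and, via Gagliardo-Nirenberg interpolation, $w_k^{1/2}\in W^{1,2\alpha}(\T^d)$. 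A second test with $y=\log w_k$, corresponding to the logarithmic entropy $h(u)=u\log u$, provides the additional control needed to keep $w_k>0$ when passing to the limit.

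The $H^2$-bound on $w_k^{\alpha/2}$ combined with Sobolev embedding supplies enough strong compactness of $w_k^\eps$ to identify every nonlinear term in \eqref{ex.weak0} as $\eps\to 0$. The main obstacle, I expect, is the limit passage in the mixed quasilinear product $w^{1-\alpha/2}\na^2 w^{\alpha/2}-\alpha^2\na w^{1/2}\otimes\na w^{1/2}$: each factor must possess just enough integrability for the product to pair against $\na^2\phi$ for $\phi\in W^{2,\alpha/(\alpha-1)}(\T^d)$, and this tension is exactly what forces the sharp upper bound $(\sqrt d+1)^2/(d+2)$ on $\alpha$. A secondary delicate point is that the nonnegativity $\sigma(E)v_k\ge 0$ must survive the limit $\eps\to 0$, which follows from pointwise a.e.\ convergence of $w_k^\eps=e^{y^\eps}>0$.
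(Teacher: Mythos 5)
Your overall strategy coincides with the paper's: exponential substitution $w_k=e^y$, an $\eps$-regularized problem solved by Lax--Milgram plus a Leray--Schauder fixed point with frozen coefficients, the test function $w_k^{\alpha-1}$ combined with G-stability and the J\"ungel--Matthes inequality \eqref{dlss.A} to obtain \eqref{ex.H} and the $H^2$ bound on $w_k^{\alpha/2}$, and compactness to pass to the limit $\eps\to0$. There is, however, one concrete gap: the regularization you propose, $\eps(\Delta^2 y+y)$, is not sufficient. In the entropy step you must pair the $\eps$-term with the test function $e^{(\alpha-1)y}$ and show that it is bounded from below uniformly in $y$; but
$$
  \int_{\T^d}\Delta y\,\Delta\big(e^{(\alpha-1)y}\big)dx
  =(\alpha-1)\int_{\T^d}e^{(\alpha-1)y}\big((\Delta y)^2+(\alpha-1)|\na y|^2\Delta y\big)dx,
$$
and the indefinite cross term $e^{(\alpha-1)y}|\na y|^2\Delta y$ cannot be absorbed by $e^{(\alpha-1)y}(\Delta y)^2$ alone --- Young's inequality leaves an uncontrolled $e^{(\alpha-1)y}|\na y|^4$ contribution. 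The paper therefore takes $L(y)=\Delta^2y-\diver(|\na y|^2\na y)+y$; the extra quartic gradient term is exactly what makes the combination pointwise nonnegative (Lemma \ref{lem.assL}), so that $\langle L(y),e^{(\alpha-1)y}\rangle\ge-C$ and the $\eps$-term drops out of \eqref{ex.H} in the limit. Its linearization $-\diver(|\na z|^2\na y)$ then forces the fixed-point space to be $W^{1,4}(\T^d)$ rather than $L^\infty(\T^d)$.

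Two smaller points. The bound $w_k^{1/2}\in W^{1,2\alpha}(\T^d)$ and the strong convergence of $\na w_\eps^{1/2}$ do not follow from Gagliardo--Nirenberg interpolation applied to $w^{\alpha/2}\in H^2(\T^d)$; one needs the Lions--Villani-type lemma ($u\ge0$, $u\in H^2$ implies $u^{1/2}\in W^{1,4}$, applied to $u=w_\eps^{\alpha/2}$) together with Proposition A.1 of \cite{JuMi09}. And the restriction $\alpha<(\sqrt d+1)^2/(d+2)$ comes solely from the validity of the functional inequality \eqref{dlss.A}, not from integrability constraints in the limit passage as you suggest at the end; the additional requirement $\alpha<2$, needed to control the discrete time derivative under the logarithmic test (Lemma \ref{lem.est.t}), is automatically satisfied in this range.
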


Again, the condition $1<\alpha<(\sqrt{d}+1)^2/(d+2)$ is needed to derive
the entropy dissipation of \eqref{1.dlss}; see \cite{JuMa08}.


\subsection{Second-order convergence rate}\label{sec.thm3}

Let $\V\hookrightarrow \HH\hookrightarrow \V'$ be a Gelfand tripel 
\cite[Section 23.4]{Zei90}, where $\V$ is a Banach space and $\HH$ is
a Hilbert space with scalar product $(\cdot,\cdot)$ and norm $\|\cdot\|$. 
We assume that we can define the notion of positivity on $\HH$.
Furthermore, let $A:D(A)\to \V'$ be a (nonlinear) operator with domain $D(A)\subset\V$
and let $h(u)=u^\alpha$ for
$u\in D(A)$, $u>0$ with $\alpha\ge 1$. (We assume that the expression $u^\alpha$
makes sense in $\V$.) Then, given $v_0=u_0^{\alpha/2}$, let $v_1$ be the solution
to the implicit Euler scheme (which is assumed to exist)
\begin{equation}\label{euler}
  \frac{2}{\alpha\tau}(v_1-v_0) + v_1^{1-2/\alpha}A(v_1^{2/\alpha}) = 0.
\end{equation}
We assume that the scheme $(\rho,\sigma)$ with $p=2$ is G-stable and that
the differentation error $\delta_D(t)$ and the interpolation error $\delta_I(t)$,
defined by \cite[Section V.6]{HaWa91}
\begin{equation}\label{deltaD}
  \delta_D(t) = \rho(E)v(t) - \tau v_t(t+2\tau), \quad
	\delta_I(t) = \sigma(E)v(t) - v(t+2\tau)
\end{equation}
are of second order (see Section \ref{sec.g.ene}).

\begin{theorem}\label{thm.second}
Let $(v_k)$ be a sequence of smooth solutions to \eqref{1.scheme} and \eqref{euler}
satisfying $\sigma(E)v_k>0$ and let $u$ be a smooth positive solution to \eqref{1.eq}.
Let the above assumptions on the scheme $(\rho,\sigma)$ hold.
We assume that the mapping 
$v\mapsto B(v)=\frac{\alpha}{2} v^{1-2/\alpha}A(v^{2/\alpha})$ is
well defined and satisfies the one-sided Lipschitz condition
$$
  \langle B(v)-B(\bar v),v-\bar v\rangle \ge -\kappa_1\|v-\bar v\|^2
	\quad\mbox{for all }v^{2/\alpha},\bar v^{2/\alpha}\in D(A)
$$
for some $\kappa_1>0$.
Then there exist $\tau_0>0$ and $C>0$ such that for all $0<\tau\le \tau_0$,
$$
  \|v_k-u(t_k)^{\alpha/2}\| \le C\tau^2, \quad t_k = \tau k,\ k\ge 0.
$$
\end{theorem}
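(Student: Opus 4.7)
My plan follows the Dahlquist--Hundsdorfer--Steininger strategy of \cite{HuSt91} and \cite[Theorem V.6.10]{HaWa91}, adapted to the quadratic-structure formulation in $v$ used throughout the paper. Set $\hat v(t):=u(t)^{\alpha/2}$; then $u_t+A(u)=0$ and the chain rule give $\hat v_t+B(\hat v)=0$. With $h(u)=u^\alpha$ the scheme \eqref{1.scheme} rewrites compactly as
\[
  \rho(E)v_k + \tau B(\sigma(E)v_k) = 0, \qquad k\ge 0.
\]
Evaluating this expression at the exact solution and using $\hat v_t(t_{k+2})=-B(\hat v(t_{k+2}))$ together with \eqref{deltaD}, I obtain the defect identity
\[
  \rho(E)\hat v_k + \tau B\bigl(\sigma(E)\hat v_k - \delta_I(t_k)\bigr) = \delta_D(t_k), \quad \hat v_k:=\hat v(t_k).
\]
Subtracting from the scheme yields the error equation for $e_k:=v_k-\hat v_k$,
\[
  \rho(E)e_k + \tau\bigl[B(\sigma(E)v_k) - B(\sigma(E)v_k - \eta_k)\bigr] = -\delta_D(t_k), \quad \eta_k:=\sigma(E)e_k+\delta_I(t_k),
\]
noting $\sigma(E)v_k-\eta_k = \sigma(E)\hat v_k-\delta_I(t_k)$, so the two arguments of $B$ differ exactly by $\eta_k$.

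The crucial step is to test this identity against $\eta_k$ in $\HH$, since $\eta_k$ is precisely the increment on which the one-sided Lipschitz hypothesis can act; it bounds the $B$-contribution below by $-\kappa_1\tau\|\eta_k\|^2$. The pairing with $\rho(E)e_k$ splits as $(\rho(E)e_k,\sigma(E)e_k)+(\rho(E)e_k,\delta_I(t_k))$, and G-stability (already used in Section \ref{sec.g} to establish Proposition \ref{prop.g}) converts the first inner product into the telescoping lower bound $\|E_{k+1}\|_G^2-\|E_k\|_G^2$, where $E_k:=(e_k,e_{k+1})$ and $\|\cdot\|_G$ is the norm induced by the positive definite matrix $G$. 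Combined with the equivalence of $\|\sigma(E)e_k\|$ with $\|E_k\|_G+\|E_{k+1}\|_G$ and a Young inequality, this produces a recursion
\[
  \|E_{k+1}\|_G^2 - \|E_k\|_G^2 \le C\tau\bigl(\|E_k\|_G^2+\|E_{k+1}\|_G^2\bigr) + \mathcal R_k,
\]
where $\mathcal R_k$ gathers the perturbations due to $\delta_D(t_k)$ and $\delta_I(t_k)$.

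The second-order hypothesis on $(\rho,\sigma)$ renders both consistency defects of size $O(\tau^3)$, so a Cauchy--Schwarz--Young argument handles $(\delta_D(t_k),\eta_k)$ painlessly. The main obstacle will be the cross-term $(\rho(E)e_k,\delta_I(t_k))$: because $\rho(E)e_k=\alpha_0 e_k+\alpha_1 e_{k+1}+\alpha_2 e_{k+2}$ involves $e_{k+2}$, which lies outside $\|E_k\|_G$, a naive pointwise Young inequality would need control of $\|\rho(E)e_k\|$ that is not a priori available. I plan to resolve this by a discrete Abel summation that shifts the finite difference from the error onto $\delta_I$, exploiting the fact that consecutive differences of $\delta_I$ gain an additional order of $\tau$. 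After summing from $k=0$ to $k=n-2$ and absorbing the $C\tau\|E_{k+1}\|_G^2$ term into the left-hand side for $\tau$ sufficiently small, a discrete Gronwall inequality then yields $\|E_n\|_G^2\le C\tau^4$.

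It remains to prime the recursion: $e_0=0$ by the choice of initial datum, and since \eqref{euler} is one step of implicit Euler whose local truncation error is $O(\tau^2)$ (its single-step stability is guaranteed by the same one-sided Lipschitz property of $B$), one has $\|e_1\|\le C\tau^2$. Hence $\|E_0\|_G\le C\tau^2$, and combined with the Gronwall estimate and the equivalence of $\|\cdot\|_G$ with the Euclidean norm on $\HH\times\HH$, the conclusion $\|v_k-u(t_k)^{\alpha/2}\|\le C\tau^2$ follows for all $k\ge 0$ and all $0<\tau\le\tau_0$.
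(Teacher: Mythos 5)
Your overall architecture (defect identity, testing against the perturbed increment, G-stability telescoping, one-sided Lipschitz bound, discrete Gronwall, implicit-Euler starting step) matches the paper's, but you diverge at the one point where the argument is delicate, and there your route is genuinely different and only sketched. The paper does not compare $v_k$ with the exact value $v(t_k)$; following Hundsdorfer--Steininger it compares with the \emph{modified} trajectory $\widehat v(t)=v(t)-\delta_I(t)$. The error equation then carries the perturbations $\widehat\eps(t_k)=\delta_I(t_k)-\sigma(E)\delta_I(t_k)$ and $\widehat\delta(t_k)=\delta_D(t_k)-\rho(E)\delta_I(t_k)$, both $O(\tau^3)$ by Taylor expansion (using $\sigma(1)=1$ and $\rho(1)=0$), so the cross term $-(\rho(E)e_k,\widehat\eps(t_k))$ is killed by the pointwise Young inequality $\tfrac{\tau}{2}\|\rho(E)e_k\|^2+\tfrac{1}{2\tau}\|\widehat\eps(t_k)\|^2\le C\tau(\|E_k\|_G^2+\|E_{k+1}\|_G^2)+C\tau^5$, and at the end one pays back only $\|\delta_I(t_k)\|=O(\tau^2)$. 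In your version $\widehat v=v$, so the perturbation in the cross term is $\delta_I(t_k)=O(\tau^2)$ itself. Two remarks. First, your diagnosis of the obstacle is off: $e_{k+2}$ is not the issue, since $\|\rho(E)e_k\|^2\le C(\|E_k\|_G^2+\|E_{k+1}\|_G^2)$ exactly as for $\sigma(E)e_k$ (the paper uses precisely this bound); the real issue is that $\tau^{-1}\|\delta_I(t_k)\|^2=O(\tau^3)$ accumulates over $O(1/\tau)$ steps to $O(\tau^2)$ in $\|E_n\|_G^2$, i.e.\ only first-order convergence. Second, your proposed remedy --- summation by parts via $\rho(\xi)=(\xi-1)\widetilde\rho(\xi)$ to shift the difference onto $\delta_I$, whose consecutive increments are $O(\tau^3)$ --- does address the real problem and can be made to work (the boundary term $(\widetilde\rho(E)e_n,\delta_I(t_n))$ is absorbed by Young into $\tfrac12\|E_{n+1}\|_G^2$), but this is the crux of the proof and you leave it as a plan rather than executing it. The paper's substitution $v\mapsto v-\delta_I$ achieves the same gain of one power of $\tau$ purely algebraically, before any summation, which is why its estimate stays pointwise in $k$ and is the cleaner route; if you carry out the Abel summation carefully your variant also closes.
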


The one-sided Lipschitz condition is also needed in \cite[Section V.6]{HaWa91}.
It is satisfied, for instance, 
for the operator of the population model \eqref{1.skt1}-\eqref{1.skt2}
with domain contained in $W^{1,\infty}(\Omega)$, or for monotone operators $B$.
We give some examples for the latter case.
If $A:D(A)\to\V'$ is any monotone operator, the assumption of the theorem
is trivially satisfied for $\alpha=2$ since then $A=B$. 
In this situation, we recover the ``energy'' method described in the introduction.

Next, let $A:D\to H^{-2}(\Omega)$ with 
$A(w)=\na^2:(w\na^2\log w)$ for $w\in D=\{w\in H^2(\Omega):w>0$ in $\Omega\}$ 
and $\Omega\subset\R^d$ ($d\le 3$)
be the operator of the DLSS equation. It is shown in \cite[Lemma 3.5]{JuPi03}
that $v\mapsto v^{-1}A(v^2)$ is monotone in the sense of 
$\langle v_1^{-1}A(v_1^2)-v_2^{-1}A(v_2^2),v_1-v_2\rangle \ge 0$ for
$v_1^2$, $v_2^2\in D$. This operator satisfies the assumptions of Theorem 
\ref{thm.second} for $\alpha=1$.
In fact, the above theorem is a generalization of Theorem 2 in \cite{BEJ12},
which is proved for $\alpha=1$ and the two-step BDF method only.
Another example is $\alpha=4/3$ and the fast-diffusion
operator $A(u)=-\Delta(u^{1/3})$ with Dirichlet boundary conditions,
although we do not study this operator here.


\section{General one-leg multistep schemes}\label{sec.g}

We wish to semi-discretize the Cauchy problem \eqref{1.eq} in its weak formulation
\begin{equation}\label{g.u}
  \langle u_t(t),\phi\rangle + \langle A(u(t)),\phi\rangle = 0 
  \quad\mbox{for all }\phi\in \V,\ t>0, \quad u(0)=u_0,
\end{equation}
where $\langle\cdot,\cdot\rangle$ is the dual product between $\V'$ and $\V$
(see Section \ref{sec.thm3} for the notations).
We assume that there exists a (smooth) solution $u:[0,T]\to D(A)$ to \eqref{g.u}.


\subsection{One-leg schemes and energy dissipation}\label{sec.g.ene}

We recall some basic notions of one-leg schemes and G-stability.
We introduce the time steps $t_k=\tau k$, where $\tau>0$ is the time step size
and $k\in\N$. One-leg methods can be formulated in compact form by introducing
the polynomials
$$
  \rho(\xi) = \sum_{j=0}^p \alpha_j \xi^j, \quad
  \sigma(\xi) = \sum_{j=0}^p \beta_j \xi^j,
$$
where $\alpha_j$, $\beta_j\in\R$, $\alpha_p\neq 0$, and we normalize $\sigma(1)=1$.
Let $Eu_k=u_{k+1}$ be the forward time shift, defined on the sequence $(u_k)$.
Then
$$
  \rho(E)u_k = \sum_{j=0}^p \alpha_j u_{k+j}, \quad
  \sigma(E)u_k = \sum_{j=0}^p \beta_j u_{k+j}, \quad k\ge 0.
$$
As mentioned in the introduction, the standard one-leg discretization of
\eqref{1.eq} reads as
\begin{equation}\label{g.uk}
  \tau^{-1}\rho(E)u_k + A(\sigma(E)u_k) = 0, \quad k\ge 0,
\end{equation}
where $u_k$ and $\sigma(E)u_k$ approximate $u(t_k)$ and $\tau^{-1}\rho(E)u_k$
approximates $u_t(t_k)$. The values $u_0,\ldots,u_{p-1}$ are assumed to be given.

According to \cite[Exercise 1a, Section V.6]{HaWa91}, the conditions
$\rho(1)=0$, $\rho'(1)=\sigma(1)=1$ imply the consistency of the 
scheme $(\rho,\sigma)$. If additionally $\rho'(1)+\rho''(1)=2\sigma'(1)$,
the scheme is second-order accurate, i.e., the differentiation error
\eqref{deltaD} satisfies $\|\delta_D(t)\|\le C_D\tau^3$ uniformly in $t\in(0,T)$.
The constant $C_D>0$ depends on the $L^\infty(0,T;\HH)$ norm of $v_{ttt}$. 
Furthermore, if $\sigma(1)=1$ and $\sigma'(1)=2$, the interpolation error
is of second order, i.e.\ $\|\delta_I(t)\|\le C_I\tau^2$ uniformly in $t\in(0,T)$,
and $C_I>0$ depends on $v_{tt}$.

Dahlquist \cite{Dah63} has proven that any A-stable scheme 
$(\rho,\sigma)$ is at most of second order. He related the discrete energy
dissipation to a stability condition, called G-stability. We say that
$(\rho,\sigma)$ is G-stable \cite[Def.~2.4]{Hil97}
if there exists a symmetric, positive definite matrix $G=(G_{ij})\in\R^{p\times p}$ 
(called G-matrix in the following)
such that for any sequence $(u_k)$ defined on $\V$,
\begin{equation}\label{g.stable}
  (\rho(E)u_k,\sigma(E)u_k) \ge \frac12\|U_{k+1}\|_G^2 - \frac12\|U_k\|_G^2
  \quad\mbox{for all }k\in\N,
\end{equation}
where the G-norm is given by
$$
  \|U_k\|_G^2 = \sum_{i,j=0}^{p-1} G_{ij}(u_{k+i},u_{k+j}), \quad
  U_k = (u_k,\ldots,u_{k+p-1}).
$$
Any scheme $(\rho,\sigma)$, for which $\rho(\xi)$ and $\sigma(\xi)$
are coprime polynomials, is G-stable if and only if it is A-stable \cite{BaCr89,Dah78}.
The proof in \cite{BaCr89} provides constructive formulas for the matrix $G$
(also see \cite[Section V.6]{HaWa91}). The
G-stability and the monotonicity condition \eqref{1.mono} imply energy dissipation 
since, formally, by \eqref{g.uk},
$$
  \frac12\|U_k\|_G^2 - \frac12\|U_{k-1}\|_G^2
  \le (\rho(E)u_k,\sigma(E)u_k) = -\tau(A(\sigma(E)u_k),\sigma(E)u_k) \le 0.
$$
In particular, the discrete energy $k\mapsto \frac12\|U_k\|_G^2$ is nonincreasing.
In the appendix, we derive all second-order one-leg schemes which are G-stable.

\begin{remark}\rm\label{rem.ex}
We give some known examples of G-stable one-leg methods.
Examples (ii) and (iii) are included in the family of schemes derived in the appendix.
\begin{enumerate}[{\rm (i)}]
\item
The (first-order) {\em implicit mid-point rule} is defined by $p=1$, 
$(\alpha_0,\alpha_1)=(-1,1)$ and $(\beta_0,\beta_1) = (\frac12,\frac12)$. 
Then the G-norm coincides with the norm on $\HH$.
\item
The {\em two-step BDF method} is defined by $p=2$, $(\alpha_0,\alpha_1,\alpha_2)
=(\frac12,-2,\frac32)$ and $(\beta_0,\beta_1,\beta_2)$ $=(0,0,1)$.
It is of second order and its G-matrix equals
$$
  G = \frac12\begin{pmatrix} 1 & -2 \\ -2 & 5 \end{pmatrix}.
$$
\item
A family of two-step one-leg methods is proposed in 
\cite{DLN83,KuSh05} with $p=2$ and
\begin{align*}
  (\alpha_0,\alpha_1,\alpha_2) &= \frac{1}{\gamma+1}(-\gamma,\gamma-1,1), \\
  (\beta_0,\beta_1,\beta_2) 
  &= \frac{1}{2(\gamma+1)^2}\big(\gamma(\gamma+3),(\gamma-1)^2,3\gamma+1\big),
\end{align*}
where $0<\gamma\le 1$ is a free parameter. 
In \cite{KuSh05}, the value
$\gamma=9-4\sqrt{5}\approx 0.055$ is derived by optimizing the stability at
infinity for this method, whereas the authors of \cite{DLN83} minimize the
error constant of the method, which leads to $\gamma=1/5$. 
The scheme is of second order; the G-stability follows from the identity
$$
  (\rho(E)u,\sigma(E)u)
  = U_1^\top GU_1 - U_0^\top GU_0
  + \frac{1-\gamma}{2(\gamma+1)^3}(u_0-2u_1+u_2)^2
$$
for all $u=(u_0,u_1,u_2)^\top\in\R^3$, where $U_1=(u_1,u_2)^\top$, 
$U_0=(u_0,u_1)^\top$, and the G-matrix
$$
  G = \frac{1}{2(\gamma+1)}\begin{pmatrix} \gamma & 0 \\ 0 & 1 \end{pmatrix}
$$
is diagonal.
\qed
\end{enumerate}
\end{remark}


\subsection{One-leg schemes and entropy dissipation}\label{sec.g.ent}

In this subsection, we introduce general one-leg schemes which dissipate the
entropy. To this end, let $h:D(A)\to \V$ be a differentiable and invertible function.
We assume that we can define the notion of nonnegativity on $\V$ and that
$h(u)\ge 0$ for all $u\in D(A)$, $u\ge 0$.
Our main hypothesis is 
\begin{equation}\label{g.assA}
  \langle A(\phi),h'(\phi)\rangle \ge 0 \quad\mbox{for all }\phi\in D(A).
\end{equation}
We have shown in \eqref{1.ent} that this assumption implies 
that $H[u]$ is a Lyapunov functional for \eqref{1.eq},
where we identify the function $h'(u)$ with the Fr\'echet derivative $H'[u]$.
Instead of discretizing \eqref{g.u} directly, we consider the weak formulation
\begin{equation}\label{g.u2}
  2(v_t,h(u)^{1/2}h'(u)^{-1}\phi) + \langle A(u),\phi\rangle = 0, \quad
  t>0, \quad v(0) = h(u_0)^{1/2},
\end{equation}
where $v=h(u)^{1/2}$, which is formally equivalent to \eqref{g.u}. 
In order to be well defined, we assume that the product 
$h(u)^{1/2}h'(u)^{-1}\phi$ is an element of $\HH$ and that $u\ge 0$.

With the notations from Section \ref{sec.g.ene}, let $(\rho,\sigma)$ be 
a consistent scheme. Furthermore, let $u_k$ approximate $u(t_k)$ and define
$$
  v_k = h(u_k)^{1/2}, \quad w_k = h^{-1}\big((\sigma(E)v_k)^2\big), \quad k\in\N,
$$
supposing that $(\sigma(E)v_k)^2\in\V$. Then $v_k$ approximates $h(u(t_k))^{1/2}$
and $w_k$ is an approximation of $u(t_k)$.
The one-leg multistep approximation of \eqref{g.u2} is defined by
\begin{equation}\label{g.disc}
  \frac{2}{\tau}(\rho(E)v_{k},h(w_k)^{1/2}h'(w_k)^{-1}\phi) 
  + \langle A(w_k),\phi\rangle = 0, 
  \quad w_k = h^{-1}\big((\sigma(E)v_{k})^2\big),
\end{equation}
where $k\ge 0$ and the values $v_0,\ldots,v_{p-1}$ are given. The existence of weak
solutions is investigated in Section \ref{sec.ex}.
The following proposition states that this scheme dissipates the discrete entropy 
$$
  H[V_k] = \frac12\|V_k\|_G^2 = \frac12\sum_{i,j=0}^{p-1} G_{ij}(v_{k+i},v_{k+j}), 
  \quad V_k = (v_k,\ldots,v_{k+p-1}),
$$
if the scheme is G-stable. 

\begin{proposition}\label{prop.g}
Let $(\rho,\sigma)$ be a G-stable scheme and assume that \eqref{g.assA} holds.
Let $(v_k)$ be a sequence of solutions to \eqref{g.disc} such that $h'(w_k)\in\V$. 
Then the scheme dissipates the discrete entropy, 
i.e., $k\mapsto H[V_k]$ is nonincreasing, where $V_k=(v_k,\ldots,v_{k+p-1})$.
\end{proposition}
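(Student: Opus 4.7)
The plan is to combine the definition of G-stability with the structural assumption \eqref{g.assA} by making a judicious choice of test function in the scheme \eqref{g.disc}. Since G-stability states that
$$
  (\rho(E)v_k,\sigma(E)v_k) \ge \tfrac12\|V_{k+1}\|_G^2 - \tfrac12\|V_k\|_G^2 = H[V_{k+1}] - H[V_k],
$$
the proposition reduces to showing that $(\rho(E)v_k,\sigma(E)v_k) \le 0$.

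To this end, I would plug the test function $\phi = h'(w_k)$ into \eqref{g.disc}; this is admissible because the hypothesis of the proposition guarantees $h'(w_k)\in\V$. In the first term, the factor $h'(w_k)^{-1}\phi$ then reduces to the identity, so the scalar product collapses to $(\rho(E)v_k,h(w_k)^{1/2})$. By the definition $w_k = h^{-1}((\sigma(E)v_k)^2)$ together with the sign convention $\sigma(E)v_k\ge 0$ inherited from the nonnegativity of $v_0,\ldots,v_{p-1}$ (implicit in the setup), we have $h(w_k)^{1/2} = \sigma(E)v_k$, so the first term of \eqref{g.disc} becomes exactly $(2/\tau)(\rho(E)v_k,\sigma(E)v_k)$. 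The second term, $\langle A(w_k),h'(w_k)\rangle$, is nonnegative by the main hypothesis \eqref{g.assA} applied to $\phi = w_k\in D(A)$.

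Combining these two observations with \eqref{g.disc} yields
$$
  (\rho(E)v_k,\sigma(E)v_k) = -\tfrac{\tau}{2}\langle A(w_k),h'(w_k)\rangle \le 0,
$$
and then G-stability \eqref{g.stable} delivers $H[V_{k+1}] \le H[V_k]$, which is the desired monotonicity.

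The only real obstacle I anticipate is the formal manipulation of the factor $h(w_k)^{1/2}h'(w_k)^{-1}\phi$ for $\phi = h'(w_k)$: one has to verify that this cancellation is legal in the chosen function spaces (i.e., that $h(w_k)^{1/2}$ belongs to $\HH$ and pairs with $\rho(E)v_k\in\HH$), and that the identification $\sigma(E)v_k = h(w_k)^{1/2}$ indeed holds with the nonnegative square root. Once the functional-analytic setting is tightened enough to legitimize these identifications — which is consistent with the standing assumption that $(\sigma(E)v_k)^2\in\V$ and $h'(w_k)\in\V$ — the argument is algebraically immediate. Everything else is a direct application of the two ingredients \eqref{g.assA} and \eqref{g.stable}.
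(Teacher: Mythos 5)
Your proof is correct and follows essentially the same route as the paper: the test function $\phi=h'(w_k)$ in \eqref{g.disc}, the identification $h(w_k)^{1/2}=\sigma(E)v_k$, assumption \eqref{g.assA} to get $(\rho(E)v_k,\sigma(E)v_k)\le 0$, and then G-stability \eqref{g.stable} to conclude. The cancellation concerns you raise at the end are handled in the paper simply by assuming the relevant identifications hold by definition of the scheme, so no additional argument is needed.
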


\begin{proof}
We employ the definition $h(w_k)^{1/2}=\sigma(E)v_{k}$ and the 
test function $\phi=h'(w_k)\in\V$ in \eqref{g.disc}:
$$
  (\rho(E)v_{k},\sigma(E)v_{k}) = 
  (\rho(E)v_{k},h(w_k)^{1/2}) = -\frac{\tau}{2}\langle A(w_k),h'(w_k)\rangle
  \le 0.
$$
Because of the G-stability \eqref{g.stable},
we find that for $k\ge 0$,
$$
  H[V_{k+1}] - H[V_{k}] = \frac12\|V_{k+1}\|_G^2 - \frac12\|V_{k}\|_G^2
  \le (\rho(E)v_k,\sigma(E)v_k) \le 0,
$$
finishing the proof.
\end{proof}

\begin{remark}\rm
Let $1\in\HH$, $(u_k,1)=(u_0,1)$ for all $k=0,\ldots,p-1$, and
$\langle A(\sigma(E)u_k),1\rangle=0$. Then scheme \eqref{g.uk} preserve the
mass, i.e., $(u_k,1)=(u_0,1)$ for all $k\in\N$. 
This is generally not the case for scheme \eqref{g.disc}.
However, choosing $\phi=1$ in \eqref{g.disc}, it follows that
$(\rho(E)v_k,h(w_k)^{1/2}h'(w_k)^{-1})=0$, which approximates
$\pa_t(u(t),1) = 0$.
\end{remark}


\section{Existence of semi-discrete solutions}\label{sec.ex}

In this section, we first sketch the existence proof in the general
situation \eqref{g.disc} and explain the main ideas. Rigorous proofs 
are presented for the cross-diffusion system \eqref{1.skt1}-\eqref{1.skt2}
in Section \ref{sec.skt} and for the fourth-order quantum diffusion equation
\eqref{1.dlss} in Section \ref{sec.dlss}.

\subsection{General strategy}\label{sec.gen}

The proof of the existence of weak solutions
to scheme \eqref{g.disc} is based on a regularization procedure and a
fixed-point argument. In order to pass to the limit of vanishing regularization 
parameters, some compactness is needed, which strongly depends on the properties
of the operator $A$ and the choice of the function spaces. 
Therefore, we only detail those parts of the proof which concern
the time discretization and refer to the subsequent subsections for full proofs
with specific examples of $A$ and the function spaces.

Let the assumptions at the beginning of Sections \ref{sec.thm3} and
\ref{sec.g} hold and let
$h(u)=u^\alpha$ with $1<\alpha<2$. 
Furthermore, let $\HH=L^2(\Omega)$, where $\Omega\subset\R^d$ is some bounded domain.
Let $D(A)\subset\V$ be a subspace such that $u^{\alpha-1}$, $\log u\in\V$ for
all $u\in D(A)$ (examples are given in the following subsections). We suppose:
\renewcommand{\theenumi}{\roman{enumi}} 
\renewcommand{\labelenumi}{(\theenumi)}
\newcounter{saveenumi}
\begin{enumerate}
\item The operator $A:D(A)\to\V'$ satisfies
$\langle A(u),u^{\alpha-1}\rangle\ge 0$ and
$\langle A(u),\log u\rangle\ge 0$ for all $u\in D(A)$.
\item There exists a linear bounded operator 
$\widetilde A[z]:\V\to\V'$ (``linearization'' of $A$ around $z\in D(A)$) 
such that $\widetilde A[e^y](y)=A(e^y)$ for all $e^y\in D(A)$ 
and $\langle \widetilde A[e^z](y),y\rangle\ge 0$ for all $e^z\in D(A)$
and $y\in\V$.
\end{enumerate}
\setcounter{saveenumi}{\value{enumi}}
The first condition in (i) corresponds to \eqref{g.assA}; the second condition
is needed to derive uniform estimates in the variable $\log u$
for the fixed-point argument.
Assumption (ii) is used to apply the Lax-Milgram lemma.

{\em Step 1: Exponential variable transformation.}
First, we formulate scheme \eqref{g.disc} in the variable $y=\log w_k$.
This transformation provides the positivity of the numerical solution
$w_k=e^y$. Let $k\in\N_0$ and $v_k,\ldots,v_{k+p-1}\in\HH$ be given.
Let $\Z\subset\V$ be a Banach space satisfying $\Z\subset L^\infty(\Omega)$ and
$e^{\beta y}\in D(A)\cap \Z$ for all $y\in\Z$ and $\beta>0$.
Defining $\delta_j = \alpha_j - \alpha_p\beta_j/\beta_p$, we wish to solve
the problem in the variable $y$,
\begin{equation}\label{ex.y}
  \frac{2}{\alpha\tau}e^{(1-\alpha/2)y}\left(\frac{\alpha_p}{\beta_p}e^{\alpha y/2} 
  + \sum_{j=0}^{p-1}\delta_j v_{k+j}\right)
  + A(e^y) + \eps L(y) = 0\quad\mbox{in }\Z'.
\end{equation}
The relation to the original problem will be shown in \eqref{ex.disc} below.
We have added a regularization operator $L:\Z\to\Z'$, 
which is needed to derive uniform estimates in terms of $y$.
The operator $L$ is supposed to satisfy the following conditions:
\begin{enumerate}
\setcounter{enumi}{\value{saveenumi}}
\item There exist $C>0$ and $\kappa_1>0$ such that
$\langle L(y),e^{(\alpha-1)y}\rangle\ge -C$ and 
$\langle L(y),y\rangle\ge \kappa_1\|y\|_\Z^2$ for all $y\in\Z$.
\item There exists a linear bounded operator $\widetilde L[z]:\Z\to\Z'$ 
(``linearization'' of $L$ around $z\in\Z$) 
such that $\widetilde L[y](y)=L(y)$ for all $y\in\Z$
and $\langle \widetilde L[z](y),y\rangle\ge \kappa_0\|y\|_\Z^2$ 
for all $e^z\in D(A)$ and $y\in\Z$, where $\kappa_0>0$.
\end{enumerate}
The first condition in (iii) is important to derive the discrete entropy estimates;
the second condition ensures the coercivity of a suitable  bilinear form
in the Lax-Milgram argument.
Again, Assumption (iv) is employed in the ``linearization'' of the problem.

We claim that any solution $y\in\Z$ to \eqref{ex.y}
defines a solution to 
\begin{equation}\label{ex.disc}
  \frac{2}{\tau}\int_{\Omega}h(w_k)^{1/2}h'(w_k)^{-1}\rho(E)v_k\phi dx
  + \langle A(w_k),\phi\rangle + \eps\langle L(y),\phi\rangle = 0
\end{equation}
for all $\phi\in\Z$, where $w_k=e^y\in D(A)$ and 
$h(w_k)^{1/2}h'(w_k)^{-1}=(1/\alpha)w_k^{1-\alpha/2}$.
Indeed, we define 
\begin{equation}\label{ex.v}
  v_{k+p} := \frac{1}{\beta_p}w_{k}^{\alpha/2} 
  - \sum_{j=0}^{p-1}\frac{\beta_j}{\beta_p} v_{k+j}\in \HH.
\end{equation}
Then we insert $v_{k+p}$ in the definition \eqref{1.rho} of $\sigma(E)v_k$, 
leading to
$$
  \sigma(E) v_{k} = \sum_{k=0}^{p-1}\beta_j v_{k+j} + \beta_p v_{k+p} 
	= w_{k}^{\alpha/2}.
$$
Since $w_{k}=e^y>0$, we infer that $\sigma(E)v_{k}>0$. 
Inserting \eqref{ex.v} in the definition \eqref{1.rho} of $\rho(E)v_{k}$, we find that
\begin{align}
  \rho(E) v_{k} 
  &= \alpha_p v_{k+p} + \sum_{j=0}^{p-1} \alpha_j v_{k+j}
  = \frac{\alpha_p}{\beta_p}w_{k}^{\alpha/2}
  + \sum_{j=0}^{p-1}\left(\alpha_j-\frac{\alpha_p}{\beta_p}\beta_j\right) v_{k+j} 
  \nonumber \\
  &= \frac{\alpha_p}{\beta_p}e^{\alpha y/2} + \sum_{j=0}^{p-1}\delta_j v_{k+j},
  \label{ex.rhov}
\end{align}
employing the definition $\delta_j = \alpha_j-\alpha_p\beta_j/\beta_p$.
Replacing the brackets in \eqref{ex.y} by $\rho(E) v_{k}$, 
we conclude that $(v_{k+p},w_{k})$ solves \eqref{ex.disc}.

{\em Step 2: Definition of the fixed-point operator.} 
Let $\X$ be a Banach space such that the embedding $\Z\hookrightarrow \X$
is compact and $\X\subset L^\infty(\Omega)$. 
Let $z\in\X$, $\eta\in[0,1]$ be given. We define on $\Z$ the linear forms
\begin{align*}
  a(y,\phi) &= \langle \widetilde A[e^z](y),\phi\rangle
  + \eps \langle\widetilde L[z](y),\phi\rangle, \\
  F(\phi) &= -\frac{2\eta}{\alpha\tau}\int_{\Omega}e^{(1-\alpha/2)z}
  \left(\frac{\alpha_p}{\beta_p}e^{\alpha z/2} 
  + \sum_{j=0}^{p-1}\delta_j v_{k+j}\right)
  \phi dx.
\end{align*}
Note that $e^{\gamma z}$ is well defined for all $\gamma>0$
since $z\in\Z\subset L^\infty(\Omega)$. We wish to find $y\in\Z$ such that
\begin{equation}\label{ex.LM}
  a(y,\phi) = F(\phi) \quad\mbox{for all }\phi\in \Z.
\end{equation}
By Assumptions (ii) and (iv), the forms $a$ and $F$ are continuous on $\Z$. 
The bilinear form $a$ is coercive since, by Assumptions
(ii) and (iv),
$$
  a(y,y) = \langle \widetilde A[e^z](y),y\rangle
	+ \eps \langle\widetilde L[z](y),\phi\rangle
	\ge \eps\kappa_0\|y\|_\Z^2
$$
for all $y\in\Z$.
The Lax-Milgram lemma provides a unique solution $y\in\Z$ to \eqref{ex.LM}.

This defines the fixed-point operator $S:\X\times[0,1]\to\X$, $S(z,\eta)=y$.
We have to show that $S$ is continuous and compact and that $S(z,0)=0$ for
all $z\in\X$. For these properties, more specific conditions on $A$ need to be
imposed, and we refer to the following subsections. For instance, if
$\widetilde A[e^z]+\eps\widetilde L[z]$ is one-to-one, it follows that
$S(z,0)=0$, and the compactness is a consequence of the compactness of the
embedding $\Z\hookrightarrow\X$. It remains to prove
a uniform bound for all fixed points of $S(\cdot,\eta)$. A key ingredient
is the following lemma which allows us to estimate the discrete time derivative.

\begin{lemma}[Estimation of the discrete time derivative I]\label{lem.est.t}
Let $1<\alpha<2$. The following estimate holds:
$$
  \int_\Omega\bigg(\frac{\alpha_p}{\beta_p}e^{y} 
  + e^{(1-\alpha/2)y}\sum_{j=0}^{p-1}\delta_j v_{k+j}\bigg)y dx 
  \ge -C\left(1 + \sum_{j=0}^{p-1}\|v_{k+j}\|_{L^2(\Omega)}^2\right),
$$
and the constant $C>0$ only depends on $p$, $\alpha$, $\alpha_j$, and $\beta_j$
$(j=0,\ldots,p)$.
\end{lemma}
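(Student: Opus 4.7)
\medskip

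\noindent\textbf{Proof plan.} Set $c=\alpha_p/\beta_p>0$ and $w(x)=\sum_{j=0}^{p-1}\delta_j v_{k+j}(x)$, so that the integrand is
\[
  F(y,x) \;=\; c\,y\,e^{y} \;+\; y\,e^{(1-\alpha/2)y}\,w(x).
\]
The plan is to produce a pointwise lower bound of the form $F(y,x)\ge -C_1 - C_2 w(x)^2$ and then integrate over $\Omega$, using $\|w\|_{L^2}^2\le p\sum_j\delta_j^2\|v_{k+j}\|_{L^2}^2$. The key structural observation is that the exponents $1$ and $1-\alpha/2$ are separated by the gap $\alpha/2>0$, which is precisely what is exploited by the conditions $1<\alpha<2$.

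I would split the domain according to the sign of $y$. \emph{On $\{y<0\}$}, the scalar functions $y\mapsto y e^{y}$ and $y\mapsto y e^{(1-\alpha/2)y}$ are both bounded from below: the first by $-e^{-1}$, and the second by some constant $-M'$ because $1-\alpha/2>0$ (this is where $\alpha<2$ is used) makes $e^{(1-\alpha/2)y}|y|$ bounded on $(-\infty,0]$. Hence $F(y,x)\ge -ce^{-1}-M'|w(x)|\ge -C-\tfrac12 M' w(x)^2$.

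\emph{On $\{y\ge 0\}$}, the first term $cye^{y}$ is nonnegative and should absorb the cross term. Young's inequality gives, for any $\epsilon>0$,
\[
  \bigl|y\,e^{(1-\alpha/2)y}w\bigr| \;\le\; \epsilon\, y^{2} e^{(2-\alpha)y} + \frac{1}{4\epsilon}\,w^{2}.
\]
Now, since $\alpha>1$, the function $y\mapsto y e^{(1-\alpha)y}$ is bounded on $[0,\infty)$ by some constant $M$, so
\[
  y^{2}e^{(2-\alpha)y} \;=\; (y e^{y})\cdot (y e^{(1-\alpha)y}) \;\le\; M\, y e^{y}\qquad(y\ge 0).
\]
Choosing $\epsilon=c/(2M)$ absorbs this into $cye^{y}$ and leaves a remainder $-C'w^{2}$ with $C'$ depending only on $c$, $M$, and $\alpha$. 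Thus on $\{y\ge 0\}$ one obtains $F(y,x)\ge \tfrac{c}{2}\,y e^{y}-C'w(x)^{2}\ge -C'w(x)^{2}$.

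Combining the two cases and integrating yields
\[
  \int_{\Omega}F(y,x)\,dx \;\ge\; -C|\Omega| \;-\; C''\|w\|_{L^{2}(\Omega)}^{2} \;\ge\; -C\Bigl(1+\sum_{j=0}^{p-1}\|v_{k+j}\|_{L^{2}(\Omega)}^{2}\Bigr),
\]
with $C$ depending only on $p$, $\alpha$, the $\alpha_j$ and the $\beta_j$ (through $c$, $M$, $M'$, and the $\delta_j$). The main (and really only) obstacle is identifying the correct Young splitting on $\{y\ge 0\}$: one must pair the factor $e^{(1-\alpha/2)y}$ so that, after squaring, the surviving exponential rate $2-\alpha$ is strictly below $1$, matching the dominant $ye^{y}$. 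Both hypotheses $1<\alpha<2$ enter exactly here — $\alpha<2$ keeps $F$ bounded below on $\{y<0\}$ via $1-\alpha/2>0$, while $\alpha>1$ is what permits absorption of $y^2 e^{(2-\alpha)y}$ into $ye^y$ on $\{y\ge 0\}$.
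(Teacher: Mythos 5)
Your proof is correct and follows essentially the same route as the paper: Young's inequality on the cross term, with the hypotheses $1<\alpha<2$ entering exactly as you identify ($\alpha<2$ for boundedness below on $\{y<0\}$, $\alpha>1$ so that $ye^{y}$ dominates $y^{2}e^{(2-\alpha)y}$ for $y\ge 0$). The paper simply takes $\epsilon=\tfrac12$ and invokes boundedness from below of $x\mapsto (\alpha_p/\beta_p)xe^{x}-\tfrac12 x^{2}e^{(2-\alpha)x}$ on all of $\R$, whereas you make the absorption explicit via the factorization $y^{2}e^{(2-\alpha)y}=(ye^{y})(ye^{(1-\alpha)y})$; the two arguments are interchangeable.
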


\begin{proof}
We apply the Young inequality to the second summand of the integrand:
\begin{align*}
  \int_{\Omega} & \bigg(\frac{\alpha_p}{\beta_p}ye^{y} 
  + ye^{(1-\alpha/2)y}\sum_{j=0}^{p-1}\delta_j v_{k+j}\bigg) dx \\
  &\ge \int_{\{y<0\}}\left(\frac{\alpha_p}{\beta_p}ye^y - \frac12 y^2 e^{(2-\alpha)y}
  - \frac{1}{2}\Big(\sum_{j=0}^{p-1}\delta_j v_{k+j}\Big)^2\right)dx 
  \ge -C - \frac{p}{2}\sum_{j=0}^{p-1}\delta_j^2\int_{\Omega}v_{k+j}^2 dx.
\end{align*}
The last inequality follows from the fact that the mapping
$x\mapsto (\alpha_p/\beta_p)xe^x - \frac12 x^2 e^{(2-\alpha)x}$ for $x\in\R$
is bounded from below. For this statement, we need the condition
$1<\alpha<2$.
\end{proof}

{\em Step 3: Uniform estimates.} Let $y\in\X$ be a fixed point of $S(\cdot,\eta)$
and $\eta\in[0,1]$. By construction, $y\in\Z$ solves problem \eqref{ex.y}.
With the test function $\phi=y$ in the weak formulation of \eqref{ex.y},
it follows, by Assumptions (i) and (iii) and by Lemma \ref{lem.est.t}, that
\begin{equation}\label{ex.yeps}
  \eps\kappa_1\|y\|_\Z^2 \le \langle A(e^y),y\rangle +\eps\langle L(y),y\rangle 
  \le \frac{C\eta}{\tau}\left(1 + \sum_{j=0}^{p-1}\|v_{k+j}\|_{L^2(\Omega)}^2\right).
\end{equation}
As a consequence, we obtain an $\eps$-dependent bound which is uniform in
$y\in\Z$ and $\eta\in[0,1]$ and, because of the continuous
embedding $\Z\hookrightarrow\X$,
also uniform in $y\in\X$. Thus, the fixed-point theorem of Leray-Schauder provides
the existence of a fixed point $y$ of $S(\cdot,1)$, i.e., a solution to 
\eqref{ex.y}.

{\em Step 4: Discrete entropy estimate.} We derive estimates independent of $\eps$.
For this, we use the test function $\phi=w_{k}^{\alpha-1}:=e^{(\alpha-1)y}\in\Z$
in \eqref{ex.y} (with $\eta=1$):
\begin{equation}\label{ex.wa}
  \frac{2}{\alpha\tau}\int_{\Omega} \left(\frac{\alpha_p}{\beta_p}e^{y}
  + e^{(1-\alpha/2)y}\sum_{j=0}^{p-1}\delta_j v_{k+j}\right) e^{(\alpha-1)y}dx 
  + \langle A(e^y)+\eps L(y),e^{(\alpha-1)y}\rangle = 0.
\end{equation}
The discrete time derivative is estimated as follows.

\begin{lemma}[Estimation of the discrete time derivative II]\label{lem.est.tt}
The following estimate holds:
$$
  \frac{2}{\alpha\tau}\int_{\Omega} \left(\frac{\alpha_p}{\beta_p}e^{y}
  + e^{(1-\alpha/2)y}\sum_{j=0}^{p-1}\delta_j v_{k+j}\right) e^{(\alpha-1)y}dx 
  \ge \frac{2}{\alpha\tau}\big(H[V_{k+1}]-H[V_k]\big).
$$
\end{lemma}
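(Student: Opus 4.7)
The plan is to observe that the integrand on the left-hand side is precisely the pointwise product $\sigma(E)v_{k}\cdot \rho(E)v_{k}$, and then invoke the G-stability definition \eqref{g.stable} directly. The key algebraic simplification is to pull the factor $e^{(\alpha-1)y}$ into the bracket. Doing so gives
\begin{equation*}
  \Big(\tfrac{\alpha_p}{\beta_p}e^{y}+e^{(1-\alpha/2)y}\sum_{j=0}^{p-1}\delta_j v_{k+j}\Big)e^{(\alpha-1)y}
  =e^{\alpha y/2}\Big(\tfrac{\alpha_p}{\beta_p}e^{\alpha y/2}+\sum_{j=0}^{p-1}\delta_j v_{k+j}\Big),
\end{equation*}
where I have used $e^{(1-\alpha/2)y}e^{(\alpha-1)y}=e^{\alpha y/2}$ and $e^{y}e^{(\alpha-1)y}=e^{\alpha y}=e^{\alpha y/2}\cdot e^{\alpha y/2}$.

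Next I would identify the two factors: by construction of $v_{k+p}$ in \eqref{ex.v} we have $\sigma(E)v_{k}=w_{k}^{\alpha/2}=e^{\alpha y/2}$, and by the computation \eqref{ex.rhov} we have $\rho(E)v_{k}=(\alpha_p/\beta_p)e^{\alpha y/2}+\sum_{j=0}^{p-1}\delta_j v_{k+j}$. Hence the integrand equals $\sigma(E)v_k\cdot\rho(E)v_k$, and
\begin{equation*}
  \frac{2}{\alpha\tau}\int_{\Omega}\Big(\tfrac{\alpha_p}{\beta_p}e^{y}+e^{(1-\alpha/2)y}\sum_{j=0}^{p-1}\delta_j v_{k+j}\Big)e^{(\alpha-1)y}\,dx
  =\frac{2}{\alpha\tau}\bigl(\rho(E)v_{k},\sigma(E)v_{k}\bigr),
\end{equation*}
where $(\cdot,\cdot)$ is the inner product on $\HH=L^2(\Omega)$.

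Finally, I would apply the G-stability inequality \eqref{g.stable}, which yields
\begin{equation*}
  \bigl(\rho(E)v_{k},\sigma(E)v_{k}\bigr)\ge \tfrac12\|V_{k+1}\|_G^{2}-\tfrac12\|V_{k}\|_G^{2}=H[V_{k+1}]-H[V_{k}],
\end{equation*}
and multiplying by $2/(\alpha\tau)>0$ gives the claimed bound. There is no real obstacle here: the lemma is essentially a repackaging of G-stability once the integrand is rewritten. The only thing to be slightly careful about is that the product $\sigma(E)v_k\cdot\rho(E)v_k$ is indeed integrable, which follows from $\sigma(E)v_k=w_k^{\alpha/2}\in L^\infty$ (since $y\in\Z\subset L^\infty(\Omega)$) and $v_{k+j}\in L^2(\Omega)$ for $j=0,\ldots,p-1$, so that $v_{k+p}\in L^2(\Omega)$ by \eqref{ex.v} and hence $\rho(E)v_k\in L^2(\Omega)$.
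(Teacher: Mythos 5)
Your proof is correct and follows essentially the same route as the paper: rewrite the integrand as $\sigma(E)v_k\,\rho(E)v_k$ using the definitions of $\delta_j$ and $v_{k+p}$ (equivalently, \eqref{ex.rhov}), and then apply the G-stability inequality \eqref{g.stable}. The added remark on integrability is a harmless bonus not present in the paper.
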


\begin{proof}
Using the definition $\delta_j=\alpha_j-\alpha_p\beta_j/\beta_p$ and 
definition \eqref{ex.v} for $v_{k+p}$, we can write the integrand as
\begin{align*}
  \frac{\alpha_p}{\beta_p}e^{\alpha y} 
  + e^{\alpha y/2}\sum_{j=0}^{p-1}\delta_j v_{k+j}
  &= e^{\alpha y/2}\left(\alpha_p v_{k+p} 
  + \sum_{j=0}^{p-1}\alpha_j v_{k+j}\right) \\
  &= w_k^{\alpha/2}\rho(E)v_{k} = \sigma(E)v_{k}\,\rho(E)v_{k}. \\
\end{align*}
With the G-stability of the scheme $(\rho,\sigma)$, it follows from the proof
of Proposition \ref{prop.g} that
\begin{align*}
  \frac{2}{\alpha\tau}\int_{\Omega} \left(\frac{\alpha_p}{\beta_p}e^{\alpha y}
  + e^{\alpha y/2}\sum_{j=0}^{p-1}\delta_j v_{k+j}\right)dx
  &= \frac{2}{\alpha\tau}\int_{\Omega}\sigma(E)v_{k}\,\rho(E)v_{k}dx \\
  &\ge \frac{2}{\alpha\tau}\big(H[V_{k+1}]-H[V_k]\big),
\end{align*}
ending the proof.
\end{proof}

By Assumption (iii), $\langle L(y),e^{(\alpha-1)y}\rangle\ge -C$. Therefore,
using Lemma \ref{lem.est.tt}, \eqref{ex.wa} becomes
\begin{equation}\label{ex.Heps}
  H[V_{k+1}] + \frac{\alpha\tau}{2}\langle A(e^y),e^{(\alpha-1)y}\rangle 
  \le \frac{\eps\alpha\tau C}{2} + H[V_k].
\end{equation}
This is the key inequality to derive the $\eps$-independent bounds (observe that
$\eps<1$). Depending on the properties on the operator $A$, 
the expression $\langle A(e^y),e^{(\alpha-1)y}\rangle$ may yield certain Sobolev
estimates (see Sections \ref{sec.skt} and \ref{sec.dlss} for examples). 
Assumption (i) ensures that this term is at least nonnegative, and this is 
sufficient to derive $L^\alpha$ estimates for $w_k$.

\begin{lemma}\label{lem.vw}
There exists a constant $C>0$ such that for all $k\in\N$,
$$
  \|v_{k+p}\|_{L^2(\Omega)} + \|w_k^{\alpha/2}\|_{L^2(\Omega)} \le C,
$$
where $C>0$ does not depend on $\eps$.
\end{lemma}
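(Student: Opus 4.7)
The plan is to read off both bounds directly from the entropy estimate \eqref{ex.Heps} established in Step 4. First I would drop the nonnegative term $\langle A(e^y), e^{(\alpha-1)y}\rangle$, whose sign is precisely the content of Assumption (i), and use $\eps \le 1$ to rewrite \eqref{ex.Heps} as
\[
H[V_{k+1}] \le H[V_k] + \frac{\alpha\tau C}{2},
\]
where the constant no longer depends on $\eps$. Since the iterates $v_k, \ldots, v_{k+p-1}$ are data at the current time step, the quantity $H[V_k]$ is finite and $\eps$-independent, so this yields an $\eps$-independent upper bound on $H[V_{k+1}]$.

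Next, I would exploit the positive definiteness of the G-matrix. With $\lambda_{\min}>0$ denoting its smallest eigenvalue,
\[
H[V_{k+1}] = \frac{1}{2}\sum_{i,j=0}^{p-1} G_{ij}(v_{k+1+i}, v_{k+1+j}) \ge \frac{\lambda_{\min}}{2}\sum_{i=0}^{p-1}\|v_{k+1+i}\|_{L^2(\Omega)}^2,
\]
and extracting the index $i = p-1$ gives the desired $\eps$-independent $L^2(\Omega)$ bound on $v_{k+p}$.

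The bound on $w_k^{\alpha/2}$ then follows immediately from the identity $w_k^{\alpha/2} = \sigma(E)v_k = \sum_{j=0}^{p}\beta_j v_{k+j}$ and the triangle inequality, since all $p+1$ summands are now controlled in $L^2(\Omega)$: the first $p$ by hypothesis on the data, the last by the previous paragraph. I do not expect any genuine obstacle in carrying this out: the conceptually hard step of converting the awkward discrete time-derivative expression into the telescoping G-norm difference $H[V_{k+1}]-H[V_k]$ has already been performed in Lemma \ref{lem.est.tt} using the G-stability of $(\rho,\sigma)$, and once this quadratic control is in hand the lemma reduces to linear algebra on the positive definite matrix $G$ combined with the triangle inequality.
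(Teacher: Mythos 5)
Your argument is correct and follows essentially the same route as the paper: both proofs combine the entropy inequality \eqref{ex.Heps} (with the $A$-term dropped via Assumption (i) and $\eps\le 1$) with the positive definiteness of $G$ to bound $\|v_{k+p}\|_{L^2(\Omega)}$, and then control $w_k^{\alpha/2}=\sigma(E)v_k$ by the already-bounded $v_{k+j}$ (the paper uses the Cauchy--Schwarz form $(\sum_j\beta_j v_{k+j})^2\le(p+1)\sum_j\beta_j^2v_{k+j}^2$ where you use the triangle inequality, which is equivalent). No gap.
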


\begin{proof}
The positive definiteness of the matrix $G$ (with some constant $C_G>0$)
implies that
\begin{align*}
  H[V_{k+1}] &= \frac12\sum_{j=0}^{p-1}G_{ij}\int_{\Omega}v_{k+1+i}v_{k+1+j}dx
  \ge \frac{C_G}{2}\sum_{j=0}^{p-1}\int_{\Omega}v_{k+1+j}^2 dx \\
  &= \frac{C_G}{2}\int_{\Omega}v_{k+p}^2 dx 
	+ \frac{C_G}{2}\sum_{j=1}^{p-1}\int_{\Omega}v_{k+j}^2 dx.
\end{align*}
In view of \eqref{ex.Heps}, this provides a uniform estimate for $v_{k+p}$ in
$L^2(\T^d)$. Furthermore,
$$
  \int_{\Omega}w_{k}^\alpha dx
  = \int_{\Omega}(\sigma(E)v_{k})^2 dx
  \le (p+1)\int_{\Omega}\sum_{j=0}^p\beta_j^2 v_{k+j}^2 dx,
$$
and we conclude a uniform estimate for $w_{k}$ in $L^\alpha(\Omega)$. 
\end{proof}

{\em Step 5: Limit $\eps\to 0$ in \eqref{ex.y}.}
Set $w_\eps=w_{k}=e^y$, $v_\eps=v_{k+p}$, and $y_\eps=y$.
Then, using Lemma \ref{lem.vw} and \eqref{ex.yeps}, 
\begin{equation}\label{ex.est}
  \|v_\eps\|_{L^2(\Omega)} + \|w_\eps^{\alpha/2}\|_{L^2(\Omega)}
  + \sqrt{\eps}\|y_\eps\|_\Z \le C.
\end{equation}
The limit $\eps\to 0$ depends on the specific structure of $A$
and cannot be detailed here without further assumptions. We refer to
the examples presented below. If we are able to prove that 
$v_\eps\to v$, $w_\eps\to w$ in appropriate spaces
for some $v$, $w$ and if the limit functions satisfy \eqref{g.disc},
we can set $v_{k+p}:=v$ and $w_k:=w$.

{\em Step 6: Discrete entropy dissipation.} The weak convergence 
$v_\eps\rightharpoonup v_{k+p}$ in $L^2(\Omega)$ for a subsequence
(see \eqref{ex.est})
and the lower semi-continuity of $u\mapsto \|u\|_{L^2(\Omega)}^2$ on $L^2(\Omega)$ yield
\begin{align*}
  \phantom{x}&\liminf_{\eps\to 0} H[v_{k+1},\ldots,v_{k+p-1},v_\eps] 
  = \sum_{i,j=0}^{p-2}G_{ij}\int_{\Omega}v_{k+1+i}v_{k+1+j}dx \\
  &\phantom{xx}{}
	+ 2\lim_{\eps\to 0}\sum_{j=0}^{p-2}G_{p-1,j}\int_{\Omega}v_\eps v_{k+1+j}dx
  + \liminf_{\eps\to 0}G_{p-1,p-1}\int_{\Omega}v_\eps^2 dx \\
  &\ge \sum_{i,j=0}^{p-2}G_{ij}\int_{\Omega}v_{k+1+i}v_{k+1+j}dx
  + 2\sum_{j=0}^{p-2}G_{p-1,j}\int_{\Omega}v_{k+p} v_{k+1+j}dx 
  + G_{p-1,p-1}\int_{\Omega}v_{k+p}^2 dx \\
  &= H[v_{k+1},\ldots,v_{k+p-1},v_{k+p}] = H[V_{k+1}].
\end{align*}
Assuming that 
\begin{equation}\label{ex.liminf}
  \liminf_{\eps\to 0}\langle A(e^{y_\eps}),e^{(\alpha-1)y_\eps}\rangle 
  \ge \langle A(e^y),e^{(\alpha-1)y}\rangle, 
\end{equation}
the limit $\eps\to 0$ in \eqref{ex.Heps} shows that
$$
  H[V_{k+1}] + \frac{\alpha\tau}{2}\langle A(e^y),e^{(\alpha-1)y}\rangle \le H[V_k],
$$
which is the desired discrete entropy dissipation inequality.


\subsection{A cross-diffusion population system}\label{sec.skt}

We make the arguments of the previous subsection rigorous for the
cross-diffusion system \eqref{1.skt1}-\eqref{1.skt3}.
Let $d\le 3$ and define the spaces $\HH=L^2(\T^d)^2$, $\V=H^1(\T^d)^2$, 
$\Z=H^2(\T^d)^2$, $\X=W^{1,4}(\T^d)^2$, and
$D(A)=\{u=(u^{(1)},u^{(2)})\in W^{1,4}(\T^d)^2:u^{(1)}>0$, $u^{(2)}>0\}$. 
The operator $A:D(A)\to\V'$ is given by
\begin{align*}
  \langle A(u),\phi\rangle 
  &= \int_{\T^d}\big((d_1+a_1 u^{(1)}+u^{(2)})\na u^{(1)}\cdot\na\phi^{(1)}
  + u^{(1)}\na u^{(2)}\cdot\na\phi^{(1)} \\
  &\phantom{xx}{}+ (d_2+a_2 u^{(2)}+u^{(1)})\na u^{(2)}\cdot\na\phi^{(2)}
  + u^{(2)}\na u^{(1)}\cdot\na\phi^{(2)}\big)dx
\end{align*}
for $u=(u^{(1)},u^{(2)})\in D(A)$ and $\phi=(\phi^{(1)},\phi^{(2)})\in\V$.
We have to verify Assumptions (i)-(iv) stated in Section \ref{sec.gen}. 
We show first that (i) is satisfied.
In the following, we define $f(u)=(f(u^{(1)}),f(u^{(2)}))$ for arbitrary functions 
$f:\R^2\to\R$ and $u=(u^{(1)},u^{(2)})$.

\begin{lemma}\label{lem.assA}
Let $1<\alpha\le 2$, $u\in D(A)$, and $4a_1 a_2\ge\max\{a_1,a_2\}+1$. Then 
\begin{align*}
  \langle A(u),\log u\rangle 
  &\ge 4\int_{\T^d}\big(d_1|\na\sqrt{u^{(1)}}|^2 + d_2|\na\sqrt{u^{(2)}}|^2\big)dx, \\ 
  \langle A(u),u^{\alpha-1}\rangle &\ge \frac{4(\alpha-1)}{\alpha^2}\int_{\T^d}\big(
  d_1|\na (u^{(1)})^{\alpha/2}|^2 + d_2|\na (u^{(2)})^{\alpha/2}|^2\big)dx.
\end{align*}
\end{lemma}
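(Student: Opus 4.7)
My approach is to test the bilinear form $\langle A(u),\cdot\rangle$ with each of the two indicated functions, expand the integrand explicitly, and in each case split it into a pure-diffusion part (which, via the chain-rule identities $|\na u|^2/u = 4|\na\sqrt u|^2$ and $u^{\alpha-2}|\na u|^2 = (4/\alpha^2)|\na u^{\alpha/2}|^2$, produces the announced right-hand side) plus a residual quadratic form in $(\na u^{(1)},\na u^{(2)})$ whose pointwise non-negativity must be established.

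For the logarithmic entropy the argument essentially writes itself: substituting $\phi=\log u$ and $\na\log u^{(j)}=\na u^{(j)}/u^{(j)}$, the diffusion terms give $4d_j|\na\sqrt{u^{(j)}}|^2$, the self-diffusion terms contribute $a_j|\na u^{(j)}|^2\ge 0$, and the remaining cross-diffusion contribution collapses to a perfect square
$$
u^{(2)}\frac{|\na u^{(1)}|^2}{u^{(1)}}+u^{(1)}\frac{|\na u^{(2)}|^2}{u^{(2)}}+2\na u^{(1)}\!\cdot\!\na u^{(2)}
=\Bigl|\sqrt{u^{(2)}/u^{(1)}}\,\na u^{(1)}+\sqrt{u^{(1)}/u^{(2)}}\,\na u^{(2)}\Bigr|^2\ge 0.
$$
Notably this requires no hypothesis on $a_1,a_2$.

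For the power entropy with $\phi=u^{\alpha-1}$ (componentwise), the same scheme isolates the diffusive main term with the prefactor $4(\alpha-1)/\alpha^2$, leaving a residual quadratic form $\mathcal R=P|\na u^{(1)}|^2+R|\na u^{(2)}|^2+2S\,\na u^{(1)}\!\cdot\!\na u^{(2)}$ with $P=a_1x^{\alpha-1}+yx^{\alpha-2}$, $R=a_2y^{\alpha-1}+xy^{\alpha-2}$, $2S=x^{\alpha-1}+y^{\alpha-1}$ (writing $x=u^{(1)}$, $y=u^{(2)}$). By Cauchy--Schwarz, $\mathcal R\ge 0$ pointwise iff $PR\ge S^2$, equivalent after expansion to the scalar inequality
$$
4a_1x^{\alpha}y^{\alpha-2}+4a_2x^{\alpha-2}y^{\alpha}+(4a_1a_2+2)x^{\alpha-1}y^{\alpha-1}\ge x^{2\alpha-2}+y^{2\alpha-2}.
$$

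I expect this last pointwise inequality, and in particular tracing why it is controlled by the hypothesis $4a_1a_2\ge\max\{a_1,a_2\}+1$, to be the main obstacle. The plan is to eliminate the two ``bad'' monomials on the right via the weighted Young estimate
$(\alpha-1)x^{\alpha}y^{\alpha-2}+(2-\alpha)x^{\alpha-1}y^{\alpha-1}\ge x^{2\alpha-2}$ and its $x\leftrightarrow y$ counterpart, reducing the task to
$$
(4a_1-\alpha+1)x^{\alpha}y^{\alpha-2}+(4a_2-\alpha+1)x^{\alpha-2}y^{\alpha}+(4a_1a_2+2\alpha-2)x^{\alpha-1}y^{\alpha-1}\ge 0.
$$
Since $1<\alpha<2$, it suffices that $\min\{a_1,a_2\}\ge(\alpha-1)/4$. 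This follows from the hypothesis: if, say, $a_2=\max\{a_1,a_2\}$, then $4a_1a_2\ge a_2+1$ forces $a_1\ge 1/4+1/(4a_2)>1/4$, and consequently $a_2\ge a_1>1/4>(\alpha-1)/4$, closing the argument.
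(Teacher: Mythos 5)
Your proof is correct. For the logarithmic entropy your computation coincides with the paper's: the cross-diffusion contribution is exactly the perfect square you exhibit (the paper writes it as $4|\na\sqrt{u^{(1)}u^{(2)}}|^2$), and indeed no hypothesis on $a_1,a_2$ is used there. For the power entropy, however, your route is genuinely different. The paper splits into the cases $u^{(1)}\le u^{(2)}$ and $u^{(1)}>u^{(2)}$, applies Young's inequality to the cross term with weights $a_1$ (resp.\ $a_2$), and uses the monotonicity of $t\mapsto t^{\alpha-1}$ and $t\mapsto t^{\alpha-2}$ to compare the two components; the hypothesis then enters through the explicit coefficients $4a_1a_2-a_2-1\ge 0$ and $4a_1a_2-a_1-1\ge 0$. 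You instead test pointwise nonnegativity of the residual quadratic form via the determinant condition $PR\ge S^2$ and dispose of the resulting scalar inequality by the weighted AM--GM estimate $(\alpha-1)x^{\alpha}y^{\alpha-2}+(2-\alpha)x^{\alpha-1}y^{\alpha-1}\ge x^{2\alpha-2}$ (valid precisely for $1\le\alpha\le 2$, degenerating harmlessly at $\alpha=2$). This avoids the case distinction and, interestingly, uses the hypothesis $4a_1a_2\ge\max\{a_1,a_2\}+1$ only through its consequence $\min\{a_1,a_2\}>1/4\ge(\alpha-1)/4$; your argument therefore establishes the second inequality under the strictly weaker condition $\min\{a_1,a_2\}\ge(\alpha-1)/4$, whereas the paper's bookkeeping is tied to the stated product condition. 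The trade-off is that the paper's version keeps the two leftover nonnegative terms $(u^{(j)})^{\alpha-1}|\na u^{(j)}|^2$ explicit with their coefficients, which documents exactly where the hypothesis is sharp for that particular splitting, while yours is shorter and more uniform.
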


\begin{proof}
The first inequality follows from
\begin{align*}
  \langle A(u),\log u\rangle
  &= \int_{\T^d}\bigg(d_1\frac{|\na u^{(1)}|^2}{u^{(1)}} 
  + d_2\frac{|\na u^{(2)}|^2}{u^{(2)}}
  + a_1|\na u^{(1)}|^2 + a_2|\na u^{(2)}|^2 \\
  &\phantom{xx}{}+ 4|\na\sqrt{u^{(1)}u^{(2)}}|^2\bigg)dx.
\end{align*}
To prove the second inequality, we calculate
\begin{align*}
  \langle A(u),u^{\alpha-1}\rangle
  &= (\alpha-1)\int_{\T^d}\big((d_1+a_1u^{(1)}+u^{(2)})
  (u^{(1)})^{\alpha-2}|\na u^{(1)}|^2
  + (u^{(1)})^{\alpha-1}\na u^{(2)}\cdot\na u^{(1)} \\
  &\phantom{xx}{}+ (d_2+a_2u^{(2)}+u^{(1)})(u^{(2)})^{\alpha-2}|\na u^{(2)}|^2
  + (u^{(2)})^{\alpha-1}\na u^{(1)}\cdot\na u^{(2)}\big)dx.
\end{align*}
For $u^{(1)}\le u^{(2)}$, we find that
\begin{align*}
  \big((u^{(1)})^{\alpha-1} & + (u^{(2)})^{\alpha-1}\big)\na u^{(1)}\cdot\na u^{(2)} 
  \ge -a_1 (u^{(1)})^{\alpha-1}|\na u^{(1)}|^2 \\
  &\phantom{xx}{}- \frac{1}{4a_1}(u^{(1)})^{\alpha-1}|\na u^{(2)}|^2 
  - \frac14 (u^{(2)})^{\alpha-1}|\na u^{(2)}|^2 
  - u^{(2)} (u^{(2)})^{\alpha-2}|\na u^{(1)}|^2 \\
  &\ge -a_1 (u^{(1)})^{\alpha-1}|\na u^{(1)}|^2 - \left(\frac14+\frac{1}{4a_1}\right)
  (u^{(2)})^{\alpha-1}|\na u^{(2)}|^2 - u^{(2)}(u^{(1)})^{\alpha-2}|\na u^{(1)}|^2.
\end{align*}
Here, we have used the inequalities $(u^{(1)})^{\alpha-1}\le (u^{(2)})^{\alpha-1}$
and $(u^{(2)})^{\alpha-2}\le (u^{(1)})^{\alpha-2}$, noting that $1<\alpha\le 2$.
In a similar way, it follows for $u^{(1)}>u^{(2)}$ that
\begin{align*}
  \big((u^{(1)})^{\alpha-1}&+(u^{(2)})^{\alpha-1}\big)\na u^{(1)}\cdot\na u^{(2)}
  \ge -a_2 (u^{(2)})^{\alpha-1}|\na u^{(2)}|^2 \\
  &\phantom{xx}{}- \left(\frac14+\frac{1}{4a_2}\right)
  (u^{(1)})^{\alpha-1}|\na u^{(1)}|^2 - u^{(1)}(u^{(2)})^{\alpha-2}|\na u^{(2)}|^2.
\end{align*}
This shows that
\begin{align*}
  \langle A(u),u^{\alpha-1}\rangle
  &\ge \frac{4(\alpha-1)}{\alpha^2}
  \int_{\T^d}\big(d_1|\na (u^{(1)})^{\alpha/2}|^2 
  + d_2|\na (u^{(2)})^{\alpha/2}|^2\big)dx \\
  &\phantom{xx}{}
  + \frac{\alpha-1}{4a_2}(4a_1a_2-a_2-1)\int_{\T^2}
  (u^{(1)})^{\alpha-1}|\na u^{(1)}|^2 dx \\
  &\phantom{xx}{}
  + \frac{\alpha-1}{4a_1}(4a_1a_2-a_1-1)\int_{\T^2}
  (u^{(2)})^{\alpha-1}|\na u^{(2)}|^2 dx
  \ge 0,
\end{align*}
since the assumption on $a_1$, $a_2$ implies that
$4a_1a_2-a_2-1\ge 0$ and $4a_1a_2-a_1-1\ge 0$.
\end{proof}

The regularization $L:\Z\to\Z'$ is defined by
\begin{equation}\label{skt.L}
  \langle L(y),\phi\rangle = \sum_{m=1}^2\int_{\T^d}\big(\Delta y^{(m)}\Delta\phi^{(m)}
  + |\na y^{(m)}|^2\na y^{(m)}\cdot\na\phi^{(m)} + y^{(m)} \phi^{(m)}\big)dx
\end{equation}
for $y=(y^{(1)},y^{(2)})$, $\phi=(\phi^{(1)},\phi^{(2)})\in\Z$. 
We show that it fulfills Assumption (iii).

\begin{lemma}\label{lem.assL}
It holds for all $y\in\Z$
$$
  \langle L(y),e^{(\alpha-1)y}\rangle \ge -\frac{1}{e(\alpha-1)}, \quad
  \langle L(y),y\rangle \ge \kappa_1\|y\|_\Z^2,
$$
where $\kappa_1>0$ only depends on the Poincar\'e constant for periodic functions 
with vanishing integral mean.
\end{lemma}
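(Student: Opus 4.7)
The plan is to prove the two bounds separately, exploiting that $L$ acts componentwise so that scalar computations on one component $y^{(m)}$ (for readability simply written $y$ below) already imply the full result.

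\emph{Coercivity (second inequality).} Testing \eqref{skt.L} against $y$ itself gives, componentwise, $\langle L(y),y\rangle = \|\Delta y\|_{L^2}^2 + \|\na y\|_{L^4}^4 + \|y\|_{L^2}^2$, so dropping the nonnegative $L^4$ term it suffices to bound $\|\Delta y\|_{L^2}^2 + \|y\|_{L^2}^2$ from below by $\kappa_1\|y\|_{H^2}^2$. I would split $y = \bar y + \widetilde y$ into its mean and its zero-mean part, then apply the periodic Poincar\'e inequality $\|\widetilde y\|_{L^2}\le C_P\|\na\widetilde y\|_{L^2}$ twice (first to $\widetilde y$, then componentwise to $\pa_i \widetilde y$) to obtain $\|\na\widetilde y\|_{L^2}^2\le C_P^2 \|\Delta\widetilde y\|_{L^2}^2$, and combine this with the Fourier identity $\|\na^2 \widetilde y\|_{L^2}=\|\Delta\widetilde y\|_{L^2}$ valid on $\T^d$. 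Since $\na y=\na\widetilde y$ and $\Delta y=\Delta\widetilde y$, the sum $\|y\|_{L^2}^2+\|\Delta y\|_{L^2}^2$ then controls $\|y\|_{H^2}^2$ with a constant depending only on $C_P$.

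\emph{Lower bound via $e^{(\alpha-1)y}$ (first inequality).} Set $\beta=\alpha-1\in(0,1)$ and compute $\na(e^{\beta y})=\beta e^{\beta y}\na y$ and $\Delta(e^{\beta y})=\beta e^{\beta y}(\Delta y+\beta|\na y|^2)$ componentwise. Inserting these in \eqref{skt.L} yields $\langle L(y),e^{\beta y}\rangle = \sum_m \int_{\T^d} e^{\beta y^{(m)}}\bigl[\beta(\Delta y^{(m)})^2 + \beta^2|\na y^{(m)}|^2\Delta y^{(m)} + \beta|\na y^{(m)}|^4\bigr]dx + \sum_m\int_{\T^d} y^{(m)} e^{\beta y^{(m)}}dx$. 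The indefinite cross term is absorbed by Young's inequality $\beta^2|\na y|^2|\Delta y|\le \tfrac{\beta}{2}(\Delta y)^2+\tfrac{\beta^3}{2}|\na y|^4$, leaving a bracket bounded below by $\tfrac{\beta}{2}(\Delta y)^2+\beta(1-\beta^2/2)|\na y|^4\ge 0$ since $\beta<1$. Thus the first sum is discarded in favor of zero.

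The remaining contribution is bounded pointwise: the scalar function $t\mapsto t e^{\beta t}$ attains its global minimum $-1/(e\beta)$ at $t=-1/\beta$, so with the normalization $|\T^d|=1$ one obtains $\int_{\T^d} y e^{\beta y}\,dx\ge -1/(e\beta)=-1/(e(\alpha-1))$ componentwise, which delivers exactly the stated bound. The hard part, and the only one that really uses the structural choice of $L$, is controlling the cross term $\beta^2|\na y|^2\Delta y$ arising from $\Delta(e^{\beta y})$: the strict inequality $\alpha<2$ (equivalently $\beta<1$) is precisely what makes Young's inequality leave a nonnegative remainder, and this is the reason the regularization was designed with the quartic gradient term $|\na y|^2\na y$ — its $p$-Laplacian strength $p=4$ matches exactly the scaling needed to dominate that cross term.
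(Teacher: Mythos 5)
Your proof is correct and follows essentially the same route as the paper: nonnegativity of the biharmonic and $4$-Laplacian contributions tested against $e^{(\alpha-1)y}$, the pointwise minimum $-1/(e(\alpha-1))$ of $t\mapsto te^{(\alpha-1)t}$, and the Poincar\'e inequality applied to the (mean-zero) components of $\na y$ combined with $\|\na^2 y\|_{L^2}=\|\Delta y\|_{L^2}$ on the torus. The only immaterial differences are that the paper establishes the nonnegativity of the fourth-order plus $p$-Laplacian part through an explicit sum-of-squares identity rather than your Young-inequality absorption (yours needs only $\alpha-1\le\sqrt2$, comfortably within the standing assumption $1<\alpha<2$), and that, exactly as in the paper's own proof, summing your componentwise pointwise bound over $m=1,2$ really yields the constant $-2/(e(\alpha-1))$, which is harmless for the subsequent use of the lemma.
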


\begin{proof}
A a straightforward computation shows that for $m=1,2$,
\begin{align*}
  \Delta y^{(m)} &\Delta (e^{(\alpha-1)y^{(m)}}) 
  + |\na y^{(m)}|^2\na y^{(m)}\cdot\na (e^{(\alpha-1)y^{(m)}}) \\
  &= 4(\alpha-1)e^{(\alpha-1)y^{(m)}}\left(\frac{\Delta e^{y^{(m)}/2}}{e^{y^{(m)}/2}}
  - (2-\alpha)\left|\frac{\na e^{y^{(m)}/2}}{e^{y^{(m)}/2}}\right|^2\right)^2 \\
  &\phantom{xx}{}
  + 4(\alpha^2-1)(3-\alpha)\left|\frac{\na e^{y^{(m)}/2}}{e^{y^{(m)}/2}}\right|^4 
  \ge 0.
\end{align*}
Then the first inequality follows from the fact that the mapping
$x\mapsto xe^{(\alpha-1)x}$, $x\in\R$, is bounded from below by $-1/(e(\alpha-1))$.
The second inequality is a consequence of
\begin{align}
  \|y\|_{H^2(\T^2)^2}^2 
  &= \sum_{m=1}^2\int_{\T^d}\big(\|\na^2 y^{(m)}\|^2+|\na y^{(m)}|^2
  +(y^{(m)})^2\big)dx \nonumber \\
  &\le \sum_{m=1}^2\int_{\T^d}\big((C^2+1)\|\na^2 y^{(m)}\|^2+(y^{(m)})^2\big)dx 
  \nonumber \\
  &\le (C^2+1)\sum_{m=1}^2\int_{\T^d}\big((\Delta y^{(m)})^2+(y^{(m)})^2\big)dx
  \le (C^2+1)\langle L(y),y\rangle, \label{H2norm}
\end{align}
where $C>0$ is the Poincar\'e constant. 
\end{proof}

The ``linearization'' of $A$ is defined by ``freezing'' the diffusion coefficients:
\begin{align*}
  \langle\widetilde A[e^z](y),\phi\rangle
  &= \int_{\T^d}\big((d_1+a_1 e^{z^{(1)}}+e^{z^{(2)}})e^{z^{(1)}}\na y^{(1)} 
  + e^{z^{(1)}+z^{(2)}}\na y^{(2)}\big)\cdot\na\phi^{(1)} dx \\
  &\phantom{xx}{}
  +\int_{\T^d}\big((d_2+a_2 e^{z^{(2)}}+e^{z^{(1)}})e^{z^{(2)}}\na y^{(2)} 
  + e^{z^{(1)}+z^{(2)}}\na y^{(1)}\big)\cdot\na\phi^{(2)} dx,
\end{align*}
for $z=(z^{(1)},z^{(2)})\in\X=W^{1,4}(\T^d)^2$, $y$, $\phi\in\V$.
This operator satisfies Assumption (ii):
\begin{align*}
  \langle \widetilde A[e^z](y),y\rangle
	&= \int_{\T^d}\big((d_1+a_1 e^{2z^{(1)}})|\na y^{(1)}|^2
	+ (d_2+a_2 e^{2z^{(2)}})|\na y^{(12)}|^2 \\
	&\phantom{xx}{}+ e^{z^{(1)}+z^{(2)}}|\na(y^{(1)}+y^{(2)})|^2\big)dx \ge 0,
  \quad y\in\Z,
\end{align*}
Similarly, we define $\widetilde L[z]$ for $z\in\X$ and $y$, $\phi\in\Z$ by
$$
  \langle\widetilde L[z](y),\phi\rangle 
  = \sum_{m=1}^2\int_{\T^d}\big(\Delta y^{(m)}\Delta\phi^{(m)}
  + |\na z^{(m)}|^2\na y^{(m)}\cdot\na\phi^{(m)} + y^{(m)} \phi^{(m)}\big)dx,
$$
fulfilling Assumption (iv).

In Step 2 in Section \ref{sec.gen}, we have defined a fixed-point operator
$S:\X\times[0,1]\to\X$. It is not difficult to show that this operator is
continuous and compact, taking into account the compactness of the embedding
$\Z=H^2(\T^d)^2\hookrightarrow\X=W^{1,4}(\T^d)^2$. In order to show that
$S(z,0)=0$ for $z\in\Z$, we write
$$
  \widetilde A[e^z](y) = -\diver(D(z)\na y),
$$
where the diffusion matrix 
$$
  D(z) = \begin{pmatrix} 
  (d_1 + a_1 e^{z^{(1)}} + e^{z^{(2)}})e^{z^{(1)}} & e^{z^{(1)}+z^{(2)}} \\
  e^{z^{(1)}+z^{(2)}} & (d_2 + a_2 e^{z^{(2)}} + e^{z^{(1)}})e^{z^{(2)}}
  \end{pmatrix},
$$
is symmetric and positive definite. Therefore,
$\widetilde A[e^z](y)+\eps\widetilde L[z](y)$ is one-to-one, showing that
$S(z,0)=0$. We infer from Steps 1-3 in Section \ref{sec.gen}
that there exists a weak solution $y\in\Z$ to \eqref{ex.y}. 
It remains to derive discrete entropy estimates (independent of
$\eps$) and to perform the limit $\eps\to 0$.
Employing the test function 
$\phi=w_{k}^{\alpha-1}=e^{(\alpha-1)y}\in\Z$
in the weak formulation of \eqref{ex.y} and taking into account Lemmas
\ref{lem.est.tt}, \ref{lem.assA}, and \ref{lem.assL}, we obtain the discrete
entropy estimate
\begin{equation}\label{skt.ent}
  H[V_{k+1}] + \frac{2\tau}{\alpha}(\alpha-1)\int_{\T^d}\big(
  d_1|\na (w_{k}^{(1)})^{\alpha/2}|^2 + d_2|\na (w_{k}^{(1)})^{\alpha/2}|^2\big)dx
  \le \frac{\eps\alpha\tau}{2e(\alpha-1)} + H[V_k].
\end{equation}

This estimate and \eqref{H2norm} are sufficient to perform the limit
$\eps\to 0$. Set $w_\eps=w_k=e^y$, $v_\eps=v_{k+p}$ (defined in \eqref{ex.v}),
and $y_\eps=y$. Because of Lemma \ref{lem.vw} and \eqref{skt.ent}, 
we have the $\eps$-independent bounds
$$
  \|v_{\eps}^{(j)}\|_{L^2(\T^d)} + \|(w_{\eps}^{(j)})^{\alpha/2}\|_{H^1(\T^d)} 
  + \sqrt{\eps}\|y_{\eps}^{(j)}\|_{H^2(\T^d)} \le C, \quad j=1,2.
$$
By compactness, there exist subsequences, which are not relabeled, such that,
as $\eps\to 0$, for $j=1,2$,
\begin{align*}
  v_{\eps}^{(j)} \rightharpoonup v^{(j)} &\quad\mbox{weakly in }L^2(\T^d), \\
  (w_{\eps}^{(j)})^{\alpha/2}\rightharpoonup z^{(j)} 
  &\quad\mbox{weakly in }H^1(\T^d), \\
  (w_{\eps}^{(j)})^{\alpha/2}\to z^{(j)} &\quad\mbox{strongly in }L^6(\T^d), \\
  \eps y_{\eps}^{(j)} \to 0 &\quad\mbox{strongly in }H^2(\T^d)
  \mbox{ and in }W^{1,4}(\T^d).
\end{align*}
The last limit implies that $\eps L(y_\eps)\to 0$ in $H^{-2}(\T^d)^2$.

In view of $0<\exp(\alpha y_{\eps}^{(j)}/2)=(w_{\eps}^{(j)})^{\alpha/2}
=\sigma(E)v_{\eps}^{(j)}$
and the linearity of the operator $\sigma(E)$, it follows that
$0\le z^{(j)}=\sigma(E)v^{(j)}$, where $\sigma(E)v^{(j)}
=\beta_p v^{(j)}+\sum_{m=0}^{p-1}\beta_m v_{k+m}^{(j)}$.
This allows us to define $w:=z^{2/\alpha}$, where $z=(z^{(1)},z^{(2)})$.
Since $1<\alpha<2$, $w_\eps^{1-\alpha/2}\to w^{1-\alpha/2}$ strongly in $L^6(\T^d)^2$.
We infer that
\begin{align*}
  \frac{2}{\alpha\tau}\left(\frac{\alpha_p}{\beta_p}e^{y_\eps} + e^{(1-\alpha/2)y_\eps}
  \sum_{m=0}^{p-1} \delta_m v_{k+m} \right)
  &= \frac{2}{\alpha\tau}e^{(1-\alpha/2)y_\eps}
  \left(\frac{\alpha_p}{\beta_p}e^{\alpha y_\eps/2} 
  + \sum_{m=0}^{p-1} \delta_m v_{k+m}\right) \\
  &= \frac{2}{\alpha\tau} w_\eps^{1-\alpha/2}\rho(E)v_\eps \\
  &\rightharpoonup \frac{2}{\alpha\tau} w^{1-\alpha/2}\rho(E)v 
  \quad\mbox{weakly in }L^{3/2}(\T^d),
\end{align*}
where $\rho(E)v=\alpha_p v+\sum_{m=0}^{p-1}\alpha_m v_{k+m}$.

It remains to perform the limit $\eps\to 0$ in the term involving $A$.
We find that, for $j=1,2$,
\begin{equation}\label{skt.naw}
  \na w_{\eps}^{(j)} = \frac{2}{\alpha}(w_{\eps}^{(j)})^{1-\alpha/2}
  \na (w_{\eps}^{(j)})^{\alpha/2}
  \rightharpoonup \frac{2}{\alpha}(w^{(j)})^{1-\alpha/2}\na (w^{(j)})^{\alpha/2}
  \quad\mbox{weakly in }L^{3/2}(\T^d).
\end{equation}
Since $\na w^{(j)}_\eps\to \na w^{(j)}$ in the sense of distributions,
\eqref{skt.naw} shows that $\na w^{(j)}_\eps\rightharpoonup \na w^{(j)}$
weakly in $L^{3/2}(\T^d)$. We conclude that 
\begin{align*}
  \langle A(w_\eps),\phi\rangle
  &\to \frac{2}{\alpha}
  \int_{\T^d}\big((d_1 + a_1w^{(1)} + w^{(2)})
  \na w^{(1)}\cdot\na\phi^{(1)} + w^{(1)}\na w^{(2)}\cdot\na\phi^{(1)} \\
  &\phantom{xx}{}+ (d_2 + a_2w^{(2)} + w^{(1)})
  \na w^{(2)}\cdot\na\phi^{(2)} + w^{(2)}\na w^{(1)}\cdot\na\phi^{(2)}\big)dx
\end{align*}
for all $\phi\in W^{1,\infty}(\T^d)^2$.
The limit $\eps\to 0$ in \eqref{ex.y} then
yields \eqref{1.d1}-\eqref{1.d2}. 
Finally, applying the limes inferior to \eqref{skt.ent}
and using the weak convergence \eqref{skt.naw} and the lower semi-continuity of
$u\mapsto\|\na u\|_{L^2(\T^d)}^2$ on $H^1(\T^d)$, inequality \eqref{1.skt.ent}
follows. Setting $v_{k+p}:=v$ and $w_{k+p}:=w$,
this finishes the proof of Theorem \ref{thm.skt}.


\subsection{The DLSS equation}\label{sec.dlss}

We apply the general scheme \eqref{g.disc} to the DLSS equation \eqref{1.dlss}.
For this, let $d\le 3$ and define $\HH=L^2(\T^d)$, $\V=\Z=H^2(\T^d)$,
$\X=W^{1,4}(\T^d)$, $D(A)=\{u\in H^2(\T^d):u>0$ in $\T^d\}$, and $A:D(A)\to\V'$ by
$$
  \langle A(u),\phi\rangle = \langle\na^2:(u\na^2\log u),\phi \rangle
  = \int_{\T^d}\big(\na^2 u
  - 4\na\sqrt{u}\otimes\na\sqrt{u}\big):\na^2\phi dx, 
$$
for $u\in D(A)$, $\phi\in\V$, where 
the tensor product $\na\sqrt{u}\otimes\na\sqrt{u}$ consists of the components
$(\pa \sqrt{u}/\pa x_i)(\pa \sqrt{u}/\pa x_j)$. 
Observe that $u\in\V$ implies that $\sqrt{u}\in W^{1,4}(\T^d)$ by the
Lions-Villani lemma (see the version in \cite[Lemma 26]{BJM11}) such that
the integral on the right-hand side of the definition of
$A$ is well defined. We need to verify Assumptions (i)-(iv) of Section \ref{sec.gen}.

By Lemma 2.2 of \cite{JuMa08}, it holds that
\begin{align}
  \langle A(u),\log u\rangle &\ge \kappa_0\int_{\T^d}(\Delta u^{1/2})^2 dx, 
  \nonumber \\
  \langle A(u),u^{\alpha-1}\rangle &\ge \kappa_\alpha\int_{\T^d}
  (\Delta u^{\alpha/2})^2 dx \label{dlss.A}
\end{align}
for all $1<\alpha<2(d+1)/(d+2)$ and $u\in D(A)$, where $\kappa_\alpha>0$ for
$\alpha\ge 1$ depends only on $\alpha$ and the space dimension $d$. 
This shows Assumption (i). The ``linearization'' of $A$ is defined by
$\widetilde A[v](u)=\na^2:(v\na^2\log u)$ for $u$, $v\in D(A)$.
The regularization $L$ and its ``linearization'' are defined similarly as 
in Section \ref{sec.skt}:
$$
  L(y) = \Delta^2 y - \diver(|\na y|^2\na y) + y, \quad
  \widetilde L[z](y) = \Delta^2 y - \diver(|\na z|^2\na y) + y
$$
for $y$, $z\in\V$ with $u=e^y$, $v=e^z$. 
By Lemma \ref{lem.assL}, Assumption (iii) is satisfied.
Moreover, Assumptions (ii) and (iv) hold as well.

From Steps 1-3 of Section \ref{sec.gen}, we infer that there exists a weak
solution $y\in\V$ to \eqref{ex.y}. The discrete entropy estimate
follows from Lemmas \ref{lem.est.tt} and \ref{lem.assL} and estimate \eqref{dlss.A}:
$$
  H[V_{k+1}] + \frac{\alpha\tau}{2}\kappa_\alpha\int_{\T^d}
  (\Delta w_k^{\alpha/2})^2 dx \le \frac{\eps\tau}{2e(\alpha-1)} + H[V_k].
$$
Together with the $L^2$-bound for $w_k^{\alpha/2}$ from Lemma \ref{lem.vw},
we obtain the following $\eps$-inde\-pen\-dent estimates
$$
  \|v_\eps\|_{L^2(\T^d)} + \|w_\eps^{\alpha/2}\|_{H^2(\T^d)} 
  + \sqrt{\eps}\|y_\eps\|_{H^2(\T^d)} \le C,
$$
where $v_\eps:=v_{k+p}$ (defined in \eqref{ex.v}), $w_\eps:=w_k$, and $y_\eps:=y$.
Arguing as in Section \ref{sec.skt}, there exist subsequences such that,
as $\eps\to 0$,
\begin{align*}
  v_\eps\rightharpoonup v &\quad\mbox{weakly in }L^2(\T^d), \\
  w_\eps^{\alpha/2}\rightharpoonup w^{\alpha/2} &\quad\mbox{weakly in }H^2(\T^d), \\
  w_\eps^{\alpha/2}\to w^{\alpha/2} &\quad\mbox{strongly in }L^{\infty}(\T^d), \\
  \eps y_\eps \to 0 &\quad\mbox{strongly in }H^2(\T^d),
\end{align*}
where $w^{\alpha/2}=\sigma(E)v$, and $\eps L(y_\eps)\to 0$ in $H^{-2}(\T^d)$.

We perform the limit $\eps\to 0$ in the fourth-order operator.
By the Lions-Villani lemma \cite[Lemma 26]{BJM11},
$$
  \|w_\eps^{\alpha/4}\|_{W^{1,4}(\T^d)}
  \le C\|w_\eps^{\alpha/2}\|_{H^2(\T^d)} \le C.
$$
Hence, because of
$$
  (w_\eps^{\alpha/4}) \mbox{ is bounded in }W^{1,4}(\T^d), \quad
  w_\eps^{\alpha/2}\to w^{\alpha/2} \mbox{ strongly in }H^1(\T^d),
$$
and $\alpha/4<1/2<\alpha/2$, Proposition A.1 of
\cite{JuMi09} shows that
$$
  |\na w_\eps^{1/2}| \to |\na w^{1/2}| \quad\mbox{strongly in }L^{2\alpha}(\T^d).
$$
Then the limit $\eps\to 0$ gives
\begin{align*}
  \na^2:(e^{y_\eps}\na^2 y_\eps)
  &= \na^2:\left(\frac{2}{\alpha}w_\eps^{1-\alpha/2}\na^2 w_\eps^{\alpha/2} 
  - 2\alpha\na w_\eps^{1/2}\otimes\na w_\eps^{1/2}\right) \\
  &\rightharpoonup \na^2:\left(\frac{2}{\alpha}w^{1-\alpha/2}\na^2 w^{\alpha/2} 
  - 2\alpha\na w^{1/2}\otimes\na w^{1/2}\right)
\end{align*}
weakly in $W^{-2,\alpha}(\T^d)$.
Therefore, passing to the limit $\eps\to 0$ in \eqref{ex.y} yields \eqref{ex.weak0}.
Finally, the discrete entropy dissipation inequality \eqref{ex.H} follows as
at the end of the previous subsection from the weak convergence of $w_\eps^{\alpha/2}$
to $w^{\alpha/2}$ in $H^2(\T^d)$ and the lower semi-continuity of
$u\mapsto\|\Delta u\|_{L^2(\T^d)}^2$ on $H^2(\T^d)$.


\section{Convergence rate}\label{sec.second}

In this section, we prove Theorem \ref{thm.second}. Let $0<\tau<1$. 
The idea of the proof is to estimate the difference $\widehat v(t)=v(t)-\delta_I(t)$ 
(see \eqref{deltaD} and \cite[Theorem V.6.10]{HaWa91}). 
Setting $B(v)=\frac{\alpha}{2} v^{1-2/\alpha}A(v^{2/\alpha})$,
we rewrite equation \eqref{1.eqv} for the exact solution as
\begin{equation}\label{3.Bv}
  \tau v_t(t_{k+2}) + \tau B(v(t_{k+2})) = 0.
\end{equation}
With the definitions \eqref{deltaD} of $\delta_D$ and $\delta_I$, 
this can be formulated as
\begin{equation}\label{3.vd}
  \rho(E)\widehat v(t_k) 
	+ \tau B\big(\sigma(E)\widehat v(t_k)-\widehat\eps(t_k)\big) = -\widehat\delta(t_k),
\end{equation}
where $\widehat\delta(t)=\delta_D(t)-\rho(E)\delta_I(t)$ and
$\widehat\eps(t)=\delta_I(t)-\sigma(E)\delta_I(t)$.

We derive bounds for $\widehat\delta$ and $\widehat\eps$.
A Taylor expansion of $\delta_I$ at $t=t_{k+1}$ and $t=t_{k+2}$ 
around $t_{k}$ and the condition $\sigma(1)=1$ yields
\begin{align*}
  \sigma(E)\delta_I(t_k) &= \beta_0\delta_I(t_k)
	+ \beta_1\big(\delta_I(t_k) + \tau\delta'_I(t_k) 
	+ \tfrac12\tau^2\delta''_I(t_k)\big) \\
	&\phantom{xx}{}
	+ \beta_2\big(\delta_I(t_k) + 2\tau\delta'_I(t_k) + 2\tau^2\delta''_I(t_k)\big)
	+ O(\tau^3) \\
	& = \delta_I(t_k) + (\beta_1+2\beta_2)\tau\delta'_I(t_k) 
	+ \frac12(\beta_1+4\beta_2)\tau^2\delta''_I(t_k) + O(\tau^3).
\end{align*}
Since $\delta_I(t_k) = O(\tau^2)$, it follows that
\begin{equation}\label{3.eps}
  \widehat\eps(t_k) = \delta_I(t_{k}) - \sigma(E)\delta_I(t_k) = O(\tau^3). 
\end{equation}
In a similar way, a Taylor expansion of $\delta_I$ at $t=t_{k+1}$ and $t=t_{k+2}$ 
around $t_k$ gives, because of $\rho(1)=0$,
\begin{align*}
  \rho(E)\delta_I(t_k) &= (\alpha_0+\alpha_1+\alpha_2)\delta_I(t_k)
	+ (\alpha_1+2\alpha_2)\tau\delta'_I(t_k) + \frac12(\alpha_1+4\alpha_2)\tau^2
	\delta''_I(t_k) + O(\tau^3) \\
	&= -\tau(\alpha_1+2\alpha_2)\delta'_I(t_k) 
	- \frac12(\alpha_1+4\alpha_2)\tau^2\delta''_I(t_k) + O(\tau^3)
	= O(\tau^3).
\end{align*}
Because of $\delta_D(t)=O(\tau^3)$ (see Section \ref{sec.g.ene}) we infer that
\begin{equation}\label{3.delta}
  \widehat\delta(t_k) = \delta_D(t_k) - \rho(E)\delta_I(t_k) = O(\tau^3).
\end{equation}

Concerning the first time step of the scheme, we observe that, by 
a Taylor expansion,
$$
  \tau v_t(t_1) = v(t_1)-v(t_0)+f_0, \quad 
	\|f_0\|\le \frac{\tau^2}{2}\|v_{tt}\|_{L^\infty(0,T;\HH)},
$$
and \eqref{3.Bv} becomes
\begin{equation}\label{3.1}
  v(t_1)-v(t_0) + \tau B(v(t_1)) = -f_0.
\end{equation}
The difference of scheme \eqref{euler}, formulated as
$(v_1-v_0) + \tau B(v_1) = 0$, and \eqref{3.1} becomes
$$
  e_1-e_0 + \tau\big(B(v_1)-B(v(t_1)\big) = f_0,
$$
where $e_j=v_j-v(t_j)$, $j=0,1$.
Then, taking this equation in the scalar product with $e_1$
and employing $e_0=0$ and the one-sided Lipschitz condition of $B$, we obtain
$$
  \|e_1\|^2 \le \kappa_1\tau\|e_1\|^2 + (f_0,e_1)
	\le \kappa_1\tau\|e_1\|^2 + \frac12\|f_0\|^2 + \frac12\|e_1\|^2.
$$
The above error estimate for $f_0$ yields $\|e_1\|\le C\tau^2$ 
if $\tau < 1/(2\kappa_1)$, where $C>0$ depends on $\|v_{tt}\|_{L^\infty(0,T;\HH)}$.

The difference of the equations
$\rho(E)v_k + \tau B(\sigma(E)v_k) = 0$ and \eqref{3.vd} leads to
the error equations for $e_k=v_k-\widehat v(t_k)$ ($k\ge 2$):
$$
  \rho(E)e_k + \tau\big(B(\sigma(E)v_k)-B(\sigma(E)\widehat v(t_k)
	-\widehat\eps(t_k))\big) = -\widehat\delta(t_k), \quad k\ge 0.
$$
Taking these equations in the dual product with $\sigma(E)e_k+\widehat\eps(t_k)$
gives
\begin{align}
  \big(\rho(E)e_k,\sigma(E) e_k\big) 
	&= -\tau\big\langle
	B(\sigma(E)v_k)-B(\sigma(E)\widehat v(t_k)-\widehat\eps(t_k)),
	\sigma(E)e_k+\widehat\eps(t_k)\big\rangle \nonumber \\
	&\phantom{xx}{}- \big(\rho(E)e_k,\widehat\eps(t_k)\big) 
	- \big(\widehat\delta(t_k),\sigma(E)e_k+\widehat\eps(t_k)\big). \label{3.ee}
\end{align}
We estimate these expressions term by term. By the G-stability, the left-hand
side becomes
$$
  \big(\rho(E)e_k,\sigma(E) e_k\big) 
	\ge \frac12\|E_{k+1}\|_G^2 - \frac12\|E_k\|_G^2,
$$
where $E_k=(e_k,e_{k+1})$. With the one-sided Lipschitz condition for $B$
and the Cauchy-Schwarz inequality,
it follows for the first term of the right-hand side of \eqref{3.ee} that
\begin{align*}
  -\tau\big\langle &	B(\sigma(E)v_k)-B(\sigma(E)\widehat v(t_k)+\widehat\eps(t_k)),
	\sigma(E)e_k+\widehat\eps(t_k)\big\rangle \\
	&\le \kappa_1\tau\|\sigma(E)e_k+\widehat\eps(t_k)\|^2
	\le 2\kappa_1\tau\|\sigma(E)e_k\|^2 + 2\kappa_1\tau\|\widehat\eps(t_k)\|^2.
\end{align*}
Because of the positive definiteness of the matrix $G$, we conclude that
$\|E_k\|_G^2\ge C(\|e_k\|^2+\|e_{k+1}\|^2)$ which implies that,
for some $C>0$ which depends on $\beta_j$ and $G$,
\begin{equation}\label{3.equi}
  \|\sigma(E)e_k\|^2 
	\le C(\|e_k\|^2 + \|e_{k+1}\|^2 + \|e_{k+2}\|^2) 
	\le C(\|E_{k}\|_G^2 + \|E_{k+1}\|_G^2).
\end{equation}
Then, using \eqref{3.eps}, we obtain
$$
  -\tau\big\langle B(\sigma(E)v_k)-B(\sigma(E)\widehat v(t_k)+\widehat\eps(t_k)),
	\sigma(E)e_k+\widehat\eps(t_k)\big\rangle
	\le C\tau (\|E_{k}\|_G^2 + \|E_{k+1}\|_G^2) + C\tau^7,
$$
where here and in the following, $C>0$ denotes a generic constant independent
of $k$ and $\tau$.
In a similar way, the second term on the right-hand side of \eqref{3.ee} 
is estimated by
$$
  -\big(\rho(E)e_k,\widehat\eps(t_k)\big)
	\le \frac{\tau}{2}\|\rho(E)e_k\|^2 + \frac{1}{2\tau}\|\widehat\eps(t_k)\|^2
	\le C\tau(\|E_{k}\|_G^2 + \|E_{k+1}\|_G^2) + C\tau^5.
$$
Finally, we use \eqref{3.eps} and \eqref{3.delta} to estimate 
the last term in \eqref{3.ee}:
\begin{align*}
  -\big(\widehat\delta(t_k),\sigma(E)e_k+\widehat\eps(t_k)\big)
	&\le \frac{1}{2\tau}\|\widehat\delta(t_k)\|^2
	+ \frac{\tau}{2}\|\sigma(E)e_k\|^2 + \|\widehat\delta(t_k)\|\,\|\widehat\eps(t_k)\| \\
	&\le C\tau(\|E_{k}\|_G^2 + \|E_{k+1}\|_G^2) + C\tau^5.
\end{align*}

We summarize the above estimates:
$$
  \frac12\|E_{k+1}\|_G^2 - \frac12\|E_k\|_G^2
	\le  C_1\tau(\|E_{k}\|_G^2 + \|E_{k+1}\|_G^2) + C_2\tau^5.
$$
Since $(1+x)/(1-x)\le 1+4x$ for $0\le x\le 1/2$, we infer that 
for $\tau\le 1/(4C_1)$,
$$
  \|E_{k+1}\|_G^2 \le \frac{1+2C_1\tau}{1-2C_1\tau}\|E_k\|_G^2
	+ \frac{2C_2\tau^5}{1-2C_1\tau} 
	\le (1+8C_1\tau)\|E_k\|_G^2 + 4C_2\tau^5.
$$
Solving these recursive inequalities, it follows that
\begin{align*}
  \|E_{k+1}\|_G^2 &\le (1+8C_1\tau)^{k+1}\|E_0\|_G^2 
	+ 4C_2\tau^5\sum_{j=0}^k(1+8C_1\tau)^j \\
	&= (1+8C_1\tau)^{t_{k+1}/\tau}\|E_0\|_G^2 
	+ \frac{C_2\tau^4}{2C_1}((1+8C_1\tau)^{t_{k+1}/\tau}-1) \\
	&\le e^{8C_1 t_{k+1}}\|E_0\|_G^2 + \frac{C_2}{2C_1}e^{8C_1 t_{k+1}}\tau^4
	\le C\tau^4,
\end{align*}
where we have used that $\|E_0\|_G\le C(\|e_0\|+\|e_1\|)=C\|e_1\|\le C\tau^2$.
Therefore, $\|e_k\|^2+\|e_{k+1}\|^2 \le C\|E_k\|^2\le C\tau^4$.
Finally, taking into account \eqref{3.delta}, we find that
$$
  \|v_k-v(t_k)\| = \|e_k - \delta_I(t_k)\|
	\le \|e_k\| + \|\delta_I(t_k)\| \le C\tau^2, \quad k\ge 2.
$$
This finishes the proof.


\section{Numerical examples}\label{sec.numerics}

In this section, we present some numerical examples for the spatial one-dimensional
Shigesada-Kawasaki-Teramoto cross-diffusion system, which illustrate the 
time decay rate of the entropy functional. Numerical examples for the quantum
diffusion equation can be found in \cite{BEJ12}.
We choose the two-step BDF and $\gamma$-method in time, 
defined in Remark \ref{rem.ex}, and finite differences in space.

The grid is defined by $x_i=ih$, $i=0,\ldots,N$, and $t_k=k\tau$, $k\ge 0$, with
constant space step size $h=1/N>0$ and time step size $\tau>0$. 
In the numerical simulations, we have taken $h=0.005$ and $\tau=10^{-6}$.
We choose the initial datum
$u^{(1)}(x,0)=2e^{-x}\sin(2\pi x)+10$ and $u^{(2)}(x,0)=-4e^{-x}\sin(2\pi x)+10$
for $x\in(0,1)$. 

The operator $A=(A_1,A_2)$ is discretized in its formulation
$A_j(u)=-\diver(d_ju^{(j)}+\frac12 a_j\na(u^{(j)})^2+\na(u^{(1)}u^{(2)})$,
where $j=1,2$. Then the two-step BDF (or simpler BDF2) scheme for 
$v_{1,i}^k$, approximating $u^{(1)}(x_i,t_k)^{\alpha/2}$,
and $v_{2,i}^k$, approximating $u^{(2)}(x_i,t_k)^{\alpha/2}$, reads as follows:
\begin{align*}
  \frac{2}{\alpha}(v_{1,i}^k)^{\alpha/2-1}
	&\left(\frac32 v_{1,i}^k - 2v_{1,i}^{k-1} + \frac12v_{1,i}^{k-2}\right)
	- \frac{d_1\tau}{h^2}\left((v_{1,i+1}^k)^{\alpha/2} - 2(v_{1,i}^k)^{\alpha/2}
	+ (v_{1,i-1}^k)^{\alpha/2}\right) \\
	&{}- \frac{a_1\tau}{2h^2}\left((v_{1,i+1}^k)^{\alpha} - 2(v_{1,i}^k)^{\alpha}
	+ (v_{1,i-1}^k)^{\alpha}\right) \\
	&{}- \frac{\tau}{h^2}\left((v_{1,i+1}^k v_{2,i+1}^k)^{\alpha/2}
	- 2(v_{1,i}^k v_{2,i}^k)^{\alpha/2} + (v_{1,i-1}^k v_{2,i-1}^k)^{\alpha/2}\right)
	= 0, \\
  \frac{2}{\alpha}(v_{2,i}^k)^{\alpha/2-1}
	&\left(\frac32 v_{2,i}^k - 2v_{2,i}^{k-1} + \frac12v_{2,i}^{k-2}\right)
	- \frac{d_2\tau}{h^2}\left((v_{2,i+1}^k)^{\alpha/2} - 2(v_{2,i}^k)^{\alpha/2}
	+ (v_{1,i-1}^k)^{\alpha/2}\right) \\
	&{}- \frac{a_2\tau}{2h^2}\left((v_{2,i+1}^k)^{\alpha} - 2(v_{2,i}^k)^{\alpha}
	+ (v_{1,i-1}^k)^{\alpha}\right) \\
	&{}- \frac{\tau}{h^2}\left((v_{1,i+1}^k v_{2,i+1}^k)^{\alpha/2}
	- 2(v_{1,i}^k v_{2,i}^k)^{\alpha/2} + (v_{1,i-1}^k v_{2,i-1}^k)^{\alpha/2}\right)
	= 0,	
\end{align*}
where $i=1,\ldots,N-1$ and $k\ge 2$. To determine the discrete solution for $k=1$,
we employ the implicit Euler method. 
The periodic boundary conditions are $v_{j,0}=v_{j,N}$ and
$v_{j,1}=v_{j,N+1}$ for $j=1,2$. 
The above nonlinear system is solved by the Newton method.
The scheme using the $\gamma$-method (with $\gamma=1/5$) is defined in a similar way.
The parameters are chosen as follows:
$$
  \textrm{ Test A: }\  d_1 = d_2 = 1, \, a_1 = a_2 = 0.01, \quad
  \textrm{ Test B: }\  d_1 = d_2 = 1, \, a_1 = a_2 = 1. 
$$
In Test A, the self-diffusion parameters are small compared to the remaining
terms, whereas in Test B, all parameters, including the cross-diffusion terms,
are of the same order.

In Figure \ref{fig.0}, the time evolution of the population densities
$u^{(1)}$ and $u^{(2)}$ for the parameters according to Test B, $\alpha=3/2$,
computed from the BDF2 scheme, is illustrated. Because of the absence of source
terms and the periodic boundary conditions, the densities
converge to the constant steady state for large times.

\begin{figure}
\centering
\includegraphics[width=80mm]{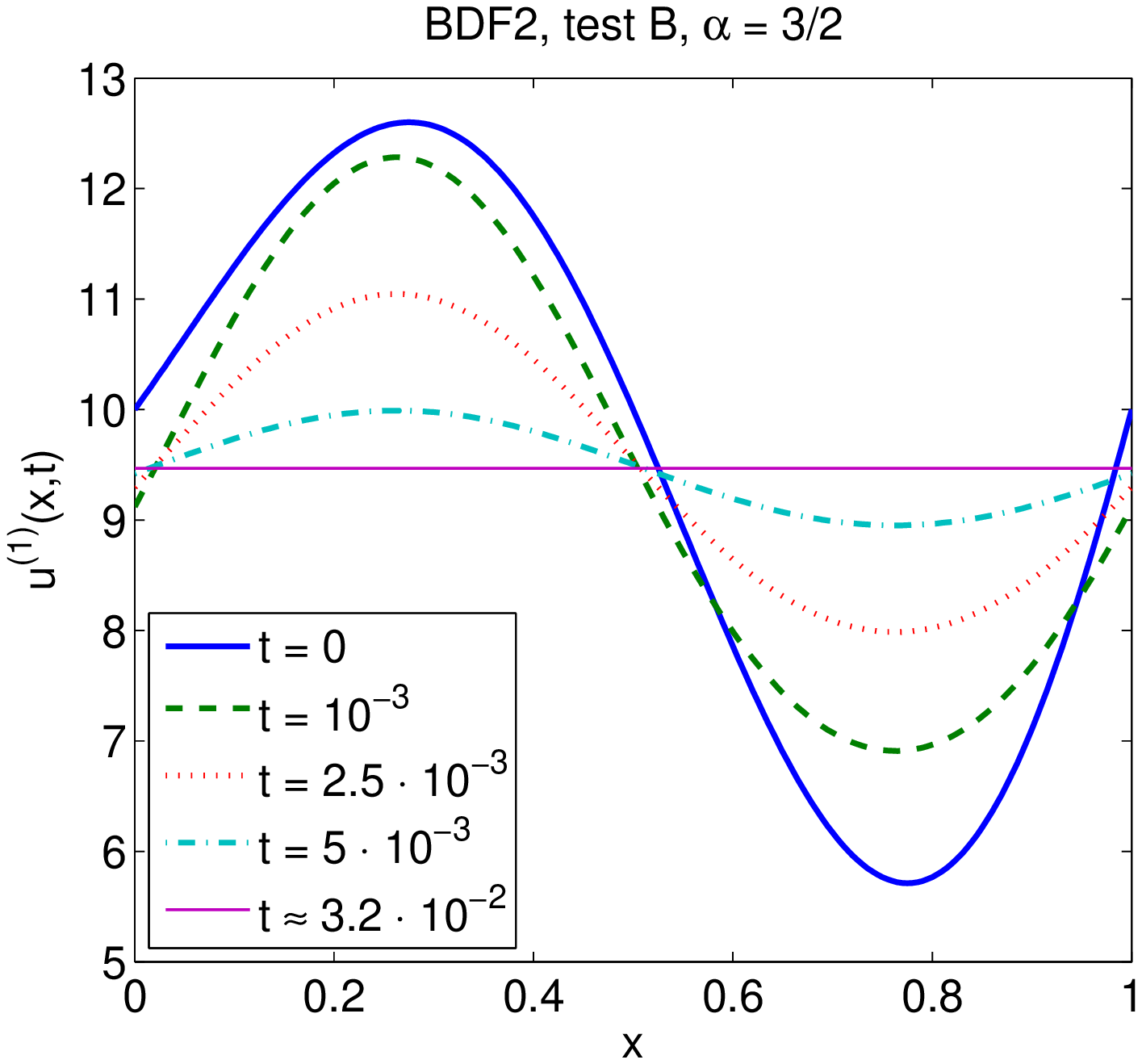}
\includegraphics[width=80mm]{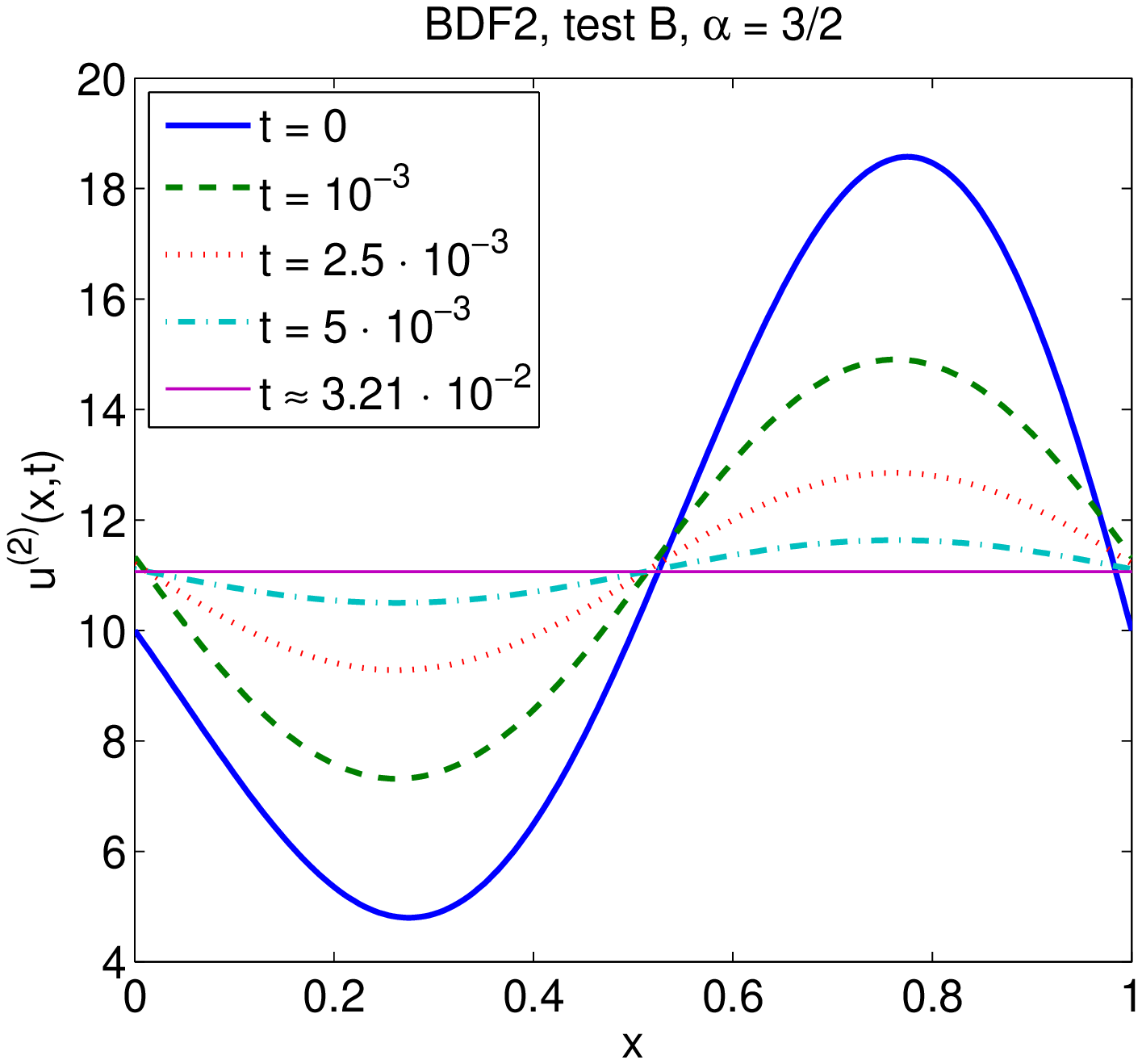}
\caption{Time evolution of the solution $u^{(1)}(x,t)$ and $u^{(2)}(x,t)$
to the population model computed from the BDF2 scheme (Test B, $\alpha=3/2$).}
\label{fig.0}
\end{figure}

The convergence of the scheme is shown in Figure \ref{fig.cr} 
at time $t_m=5\cdot 10^{-4}$. The error is measured in the $\ell^2$ norm
$$
  \|e_m\|_2 = \left(\sum_{i=0}^{N-1}\sum_{j=1}^2(v_{j,i}^m-V_{j,i}^m)^2 h\right)^{1/2},
$$
where $V_{j,i}^m$ is the reference solution computed by using 
the very small time step $\tau = 10^{-8}$.
The rates have been obtained by the linear regression method. 
As expected, the rate of convergence is (approximately) two, even for $\alpha=1$
which was excluded in our analysis.
The rate for $\alpha=2$ is the largest which comes from the fact that
in this case, we recover the usual BDF2 method without additional
nonlinearities (since $w_k=v_k$).

\begin{figure}
\centering
\includegraphics[width=80mm]{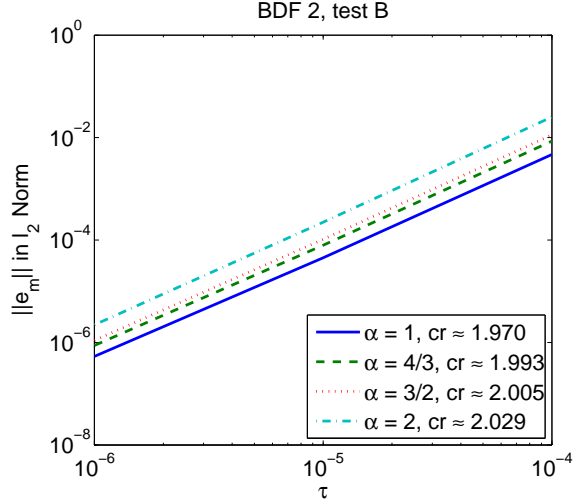}
\caption{Error $\|e_m\|_2$ versus the time step size for the BDF2 scheme 
at time $t_m=5\cdot 10^{-4}$.}
\label{fig.cr}
\end{figure}

Figures \ref{fig.bdf} and \ref{fig.gamma} illustrate the time decay of the discrete
relative entropy $E_{\alpha,d}^{\rm rel}=H_{\alpha,d}[V_k]-H_{\alpha,d}[V^*]$,
where
$$
  H_{\alpha,d}[V_k] = \frac12\sum_{i,j=0}^1 G_{ij}\sum_{\ell=1}^{N-1}
	(v_{1,\ell}^{k+i}v_{1,\ell}^{k+j}+v_{2,\ell}^{k+i}v_{2,\ell}^{k+j}),
$$
and $V^*$ represents the (constant) stationary solution. The coefficients of the 
matrix $(G_{ij})$ are given in Remark \ref{rem.ex}. We observe that in all
considered cases, the discrete entropy converges to the equilibrium 
with exponential rate.
	
\begin{figure}
\centering
\includegraphics[width=80mm]{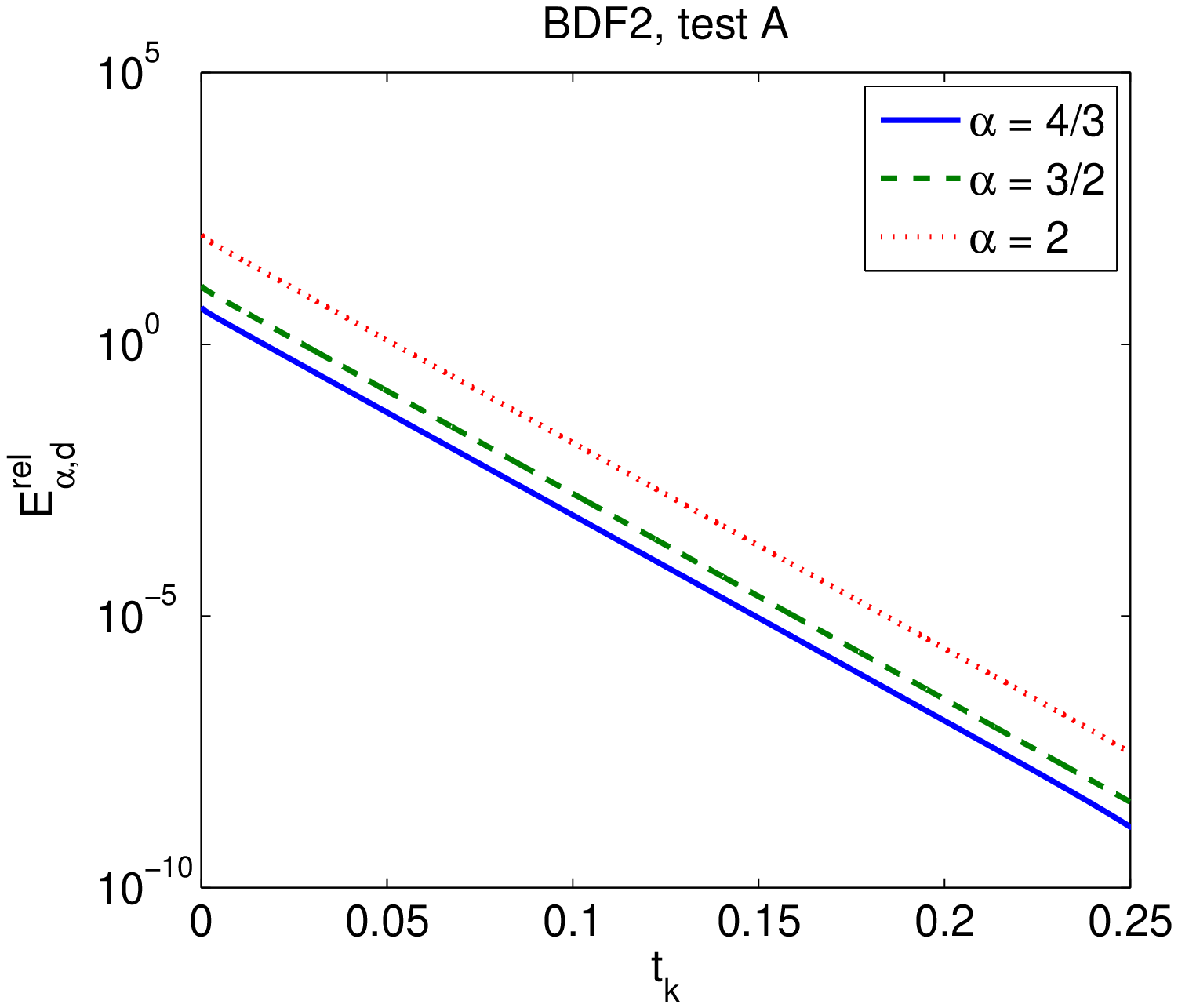}
\includegraphics[width=80mm]{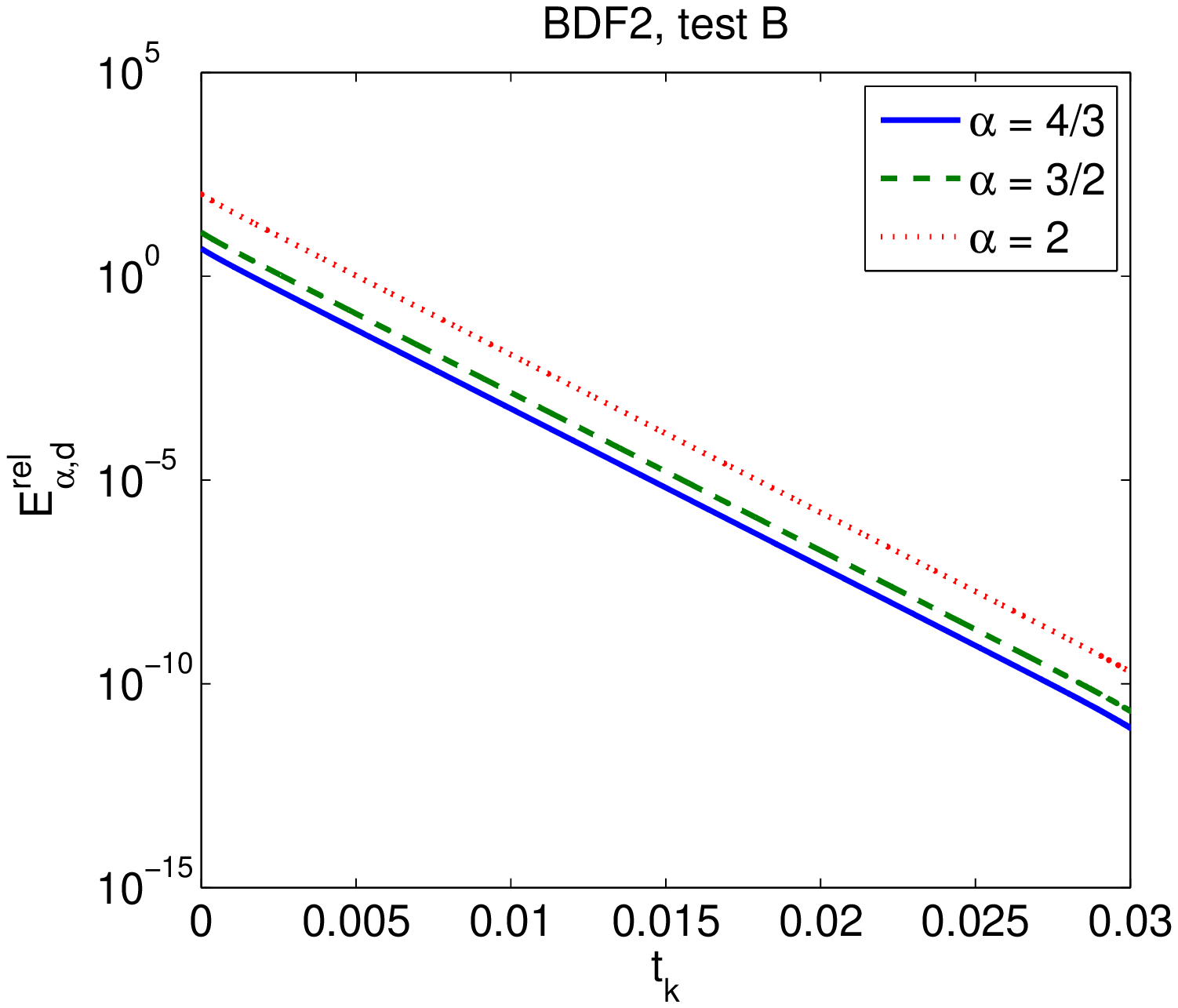}
\caption{The relative entropy for the BDF2 scheme versus time.}
\label{fig.bdf}
\end{figure} 

\begin{figure}
\centering
\includegraphics[width=80mm]{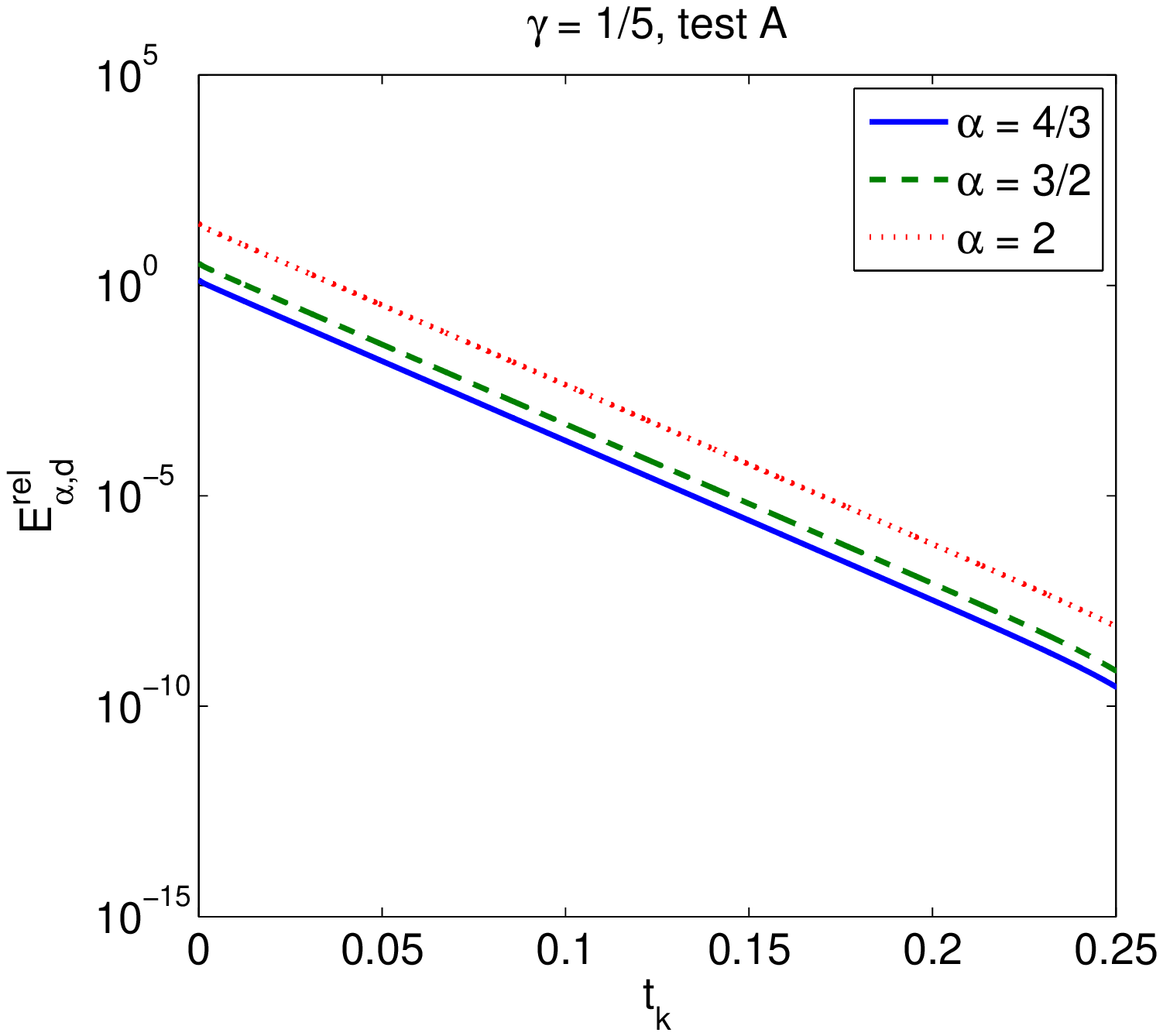}
\includegraphics[width=80mm]{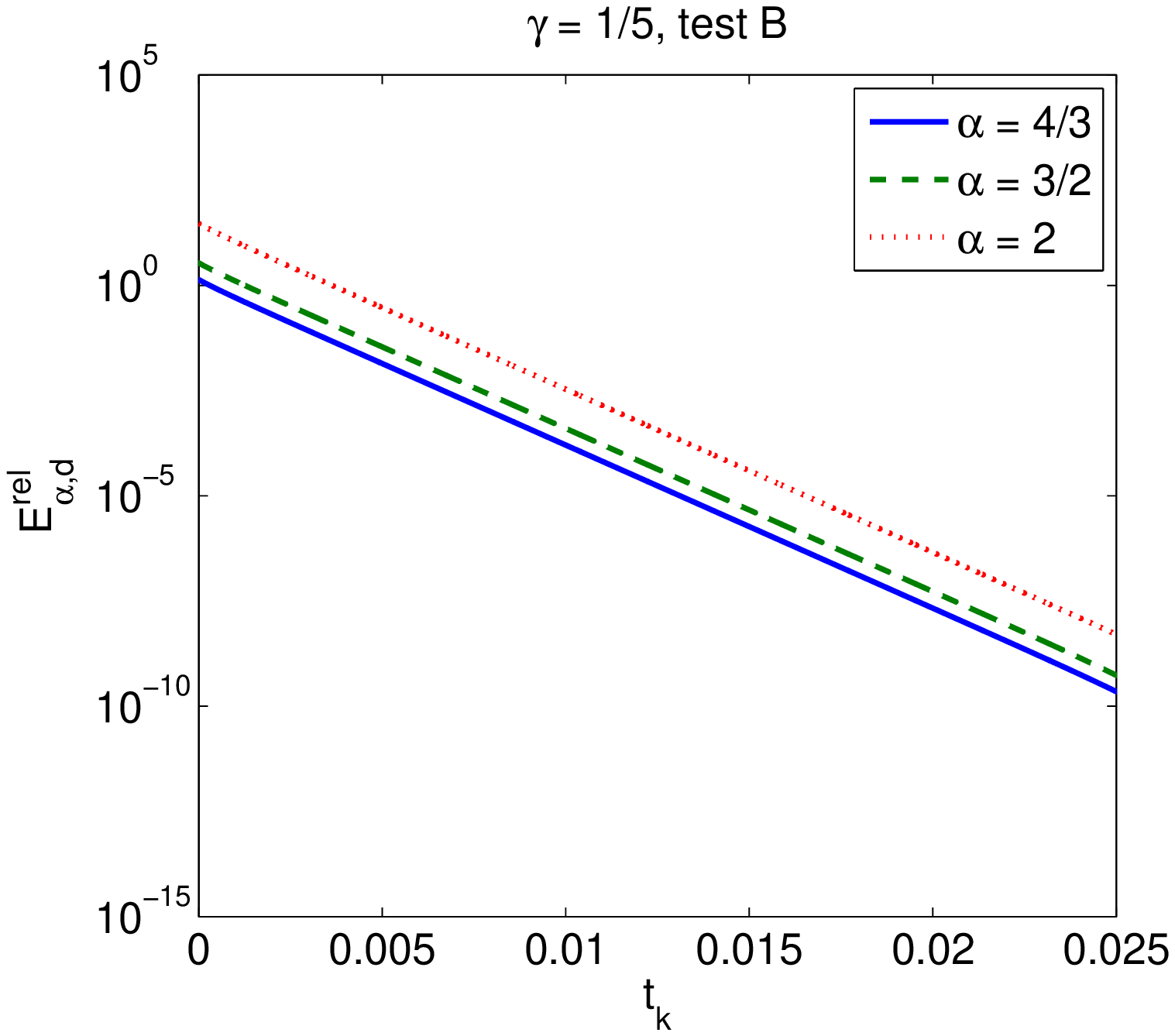}
\caption{The relative entropy for the $\gamma$-method versus time ($\gamma=1/5$).}
\label{fig.gamma}
\end{figure}


\begin{appendix}
\section{A family of second-order G-stable one-leg methods}

We derive all G-stable one-leg schemes which are of second order (in the
truncation error). Then $p=2$ and for $v=(v_0,v_1,v_2)^\top$,
$$
  \rho(E)v = \alpha_0v_0 + \alpha_1v_1 + \alpha_2v_2, \quad
  \sigma(E)v = \beta_0v_0 + \beta_1v_1 + \beta_2v_2.
$$
The normalization, consistency, and second-order accurate conditions 
(see Section \ref{sec.g.ene}) lead to the four equations
\begin{equation}\label{a.eqs1}
\begin{array}{ll}
  1 = \sigma(1) = \beta_0 + \beta_1 + \beta_2, &\quad
  0 = \rho(1) = \alpha_0 + \alpha_1 + \alpha_2, \\
  1 = \sigma(1) = \rho'(1) = \alpha_1 + 2\alpha_2, &\quad
  1+2\alpha_2 = \rho'(1)+\rho''(1) = 2\sigma'(1) = 2\beta_1+4\beta_2.
\end{array}
\end{equation}
The G-stability condition \eqref{g.stable} can be written as
\begin{equation}\label{a.stable}
  \rho(E)v\,\sigma(E)v - \frac12V_1^\top GV_1 + \frac12V_0^\top GV_0
  - (\gamma_0v_0+\gamma_1v_1+\gamma_2v_2)^2 = 0,
\end{equation}
where $V_1=(v_1,v_2)^\top$, $V_0=(v_0,v_1)^\top$, and $\gamma_j$ are some
real constants. This formulation
is possible since the occuring polynomials are at most quadratic and consequently,
the positive polynomial $p(v) = \rho(E)v\,\sigma(E)v - \frac12V_1^\top GV_1 
+ \frac12V_0^\top GV_0$ can be written as a single square. 
Condition \eqref{a.stable} has to hold for all $v\in\R^3$. 
Identifying the coefficients of $p(v)$ with those from 
$(\gamma_0 v_0+\gamma_1 v_1+\gamma_2 v_2)^2$, we find the following six equations:
\begin{equation}\label{a.eqs2}
\begin{array}{ll}
  0 = \alpha_0\beta_0 + G_{00} - \gamma_0^2, &\quad
  0 = 2G_{01} + \alpha_1\beta_0 + \alpha_0\beta_1 - 2\gamma_0\gamma_1, \\
  0 = \alpha_0\beta_2+\alpha_2\beta_0 - 2\gamma_0\gamma_2, &\quad
  0 = \gamma_{11}-\gamma_{11}+\alpha_1\beta_1-\gamma_1^2, \\
  0 = -2G_{01} + \alpha_2\beta_1+\alpha_1\beta_2-2\gamma_1\gamma_2, &\quad
  0 = -G_{11} + \alpha_2\beta_2-\gamma_2^2,
\end{array}
\end{equation}
where $G=(G_{ij})_{i,j=0,1}$. Observing that $G_{01}=G_{10}$, 
conditions \eqref{a.eqs1} and \eqref{a.eqs2} yield 10 equations for the 12 unknowns
$G_{00}$, $G_{01}$, $G_{11}$, $\alpha_j$, $\beta_j$, $\gamma_j$ ($i,j=0,1,2$). 
We also require the positive definiteness of the matrix $G$, i.e.
$$
  G_{00} > 0, \quad \det G = G_{00}G_{11}-G_{01}^2 > 0.
$$
Solving the nonlinear system \eqref{a.eqs1} and \eqref{a.eqs2} with the command
{\tt solve} in {\tt Maple} gives two sets of solutions. One solution set
yields a matrix $G$ with $\det G=0$ such that this solution can be excluded. The other
set is given by
\begin{equation}\label{a.scheme}
  (\alpha_0,\alpha_1,\alpha_2) = (\alpha_2-1,1-2\alpha_2,\alpha_2), \quad
  (\beta_0,\beta_1,\beta_2) = (\tfrac12-\alpha_2+\beta_2,\tfrac12+\alpha_2-2\beta_2,
  \beta_2),
\end{equation}
where $\alpha_2$ and $\beta_2$ are free parameters,
the matrix
$$
  G = \frac14\begin{pmatrix}
  (2\alpha_2-5)\alpha_2+2\beta_2+2 & (-2\alpha_2+3)\alpha_2-2\beta_2 \\
  (-2\alpha_2+3)\alpha_2-2\beta_2  & (2\alpha_2-1)\alpha_2+2\beta_2
  \end{pmatrix},
$$
and the constants $\gamma_j$ are solutions of certain quadratic equations
involving the coefficients $\alpha_j$ and $\beta_j$. 
The matrix $G$ is positive definite if and only if
$(2\alpha_2-5)\alpha_2+2\beta_2+2>0$ and $\det G=\beta_2-\alpha_2/2>0$.
The latter condition implies the former one since
$$
  (2\alpha_2-5)\alpha_2+2\beta_2+2 > (2\alpha_2-5)\alpha_2+\alpha_2+2 
  = 2(\alpha_2-1)^2 \ge 0.
$$
Consequently, all one-leg schemes \eqref{a.scheme} satisfying
$\beta_2> \alpha_2/2$ are of second order and G-stable.

The examples given in Remark \ref{rem.ex} are included in the above family of
schemes. Indeed, choosing $\alpha_2=\frac32$ and $\beta_2=1$, we find the
two-step BDF method, and setting for $\gamma\in(0,1]$ $\alpha_2=1/(\gamma+1)$
and $\beta_2=(3\gamma+1)/(2(\gamma+1)^2)$, we recover the $\gamma$-method
of \cite{DLN83,KuSh05}. 

Notice that we could repeat the same procedure to derive all first-order
G-stable schemes. The solution set will depend on three free parameters since
only 9 equations for 12 unknowns need to be solved. We leave the
details to the reader.
\end{appendix}


\end{document}